\declaretheoremstyle[
headformat=\NAME\,\NUMBER\NOTE,
postheadspace=.5em,
spaceabove=6pt,
headfont=\bfseries,
notefont=\normalfont\small\mdseries, notebraces={(}{)},
bodyfont=\normalfont\itshape
]{plainswap}
\declaretheorem[style=plainswap, name=Theorem, sharenumber=subsection]{theorem}
\declaretheorem[style=plainswap, numberlike=theorem, name=Lemma]{lemma}
\declaretheorem[style=plainswap, numbered=no, name=Main Theorem]{mainthm}
\declaretheoremstyle[
headformat=\NAME\,\NUMBER\NOTE,
postheadspace=.5em,
spaceabove=6pt,
headfont=\bfseries,
notefont=\normalfont\mdseries, notebraces={(}{)},
bodyfont=\normalfont
]{definitionswap}
\theoremstyle{definition}
\declaretheorem[style=definitionswap, numberlike=theorem, name=Definition]{definition}
\declaretheorem[style=definitionswap, numberlike=theorem, name=Example]{example}
\declaretheorem[style=definitionswap, numberlike=theorem, name=Remark]{remark}
\renewcommand{\P}{\mathbb{P}}
\newcommand{\C}{\mathbb{C}}
\newcommand{\A}{\mathcal{A}}
\newcommand{\Z}{\mathbb{Z}}
\newcommand{\Q}{\mathbb{Q}}
\renewcommand{\O}{\mathcal{O}}
\renewcommand{\S}{\mathcal{S}}
\newcommand{\eps}{\varepsilon}
\newcommand{\w}{\omega}
\newcommand{\xqed}{\hspace*{\fill}\spadesuit}
\DeclareMathOperator{\ext}{Ext^\bullet}
\DeclareMathOperator{\hh}{H^\bullet}
\newcommand{\T}{\widetilde{T}}
\newcommand{\E}{\widetilde{E}}
\newcommand{\F}{\widetilde{F}}
\begin{document}
\title{Towards a full exceptional collection for the adjoint grassmannian of type E6}
\author{Valentin Boboc}
\address{The University of Manchester Alan Turing Building, Oxford Road, Manchester M13 9PL}
\email{valentinboboc@icloud.com}

\begin{abstract}
We construct a Lefschetz exceptional collection of vector bundles in the bounded derived category of coherent sheaves of the adjoint/coadjoint Grassmannian of type $E_6$ of dimension $21$. 
\end{abstract}
\maketitle
\section{Introduction}
There is a long history of studying exceptional collections of derived categories of algebraic varieties. In particular, it is conjectured that the derived category $\mathbf{D}^b(X)$ of any homogeneous space $X = G/P$ has a full exceptional collection. For homogeneous spaces associated to Lie groups of classical type $A_n$, $B_n$, $C_n$, $D_n$, the variety $G/P$ corresponds to Grassmannians of lines, orthogonal lines, isotropic lines, and Lagrangian lines, respectively. Their derived categories were amply studied by various authors. The reader may consult \cite{guseva2020derived}\cite{kapranov1988derived}\cite{kuznetsov2008exceptionaliso}\cite{kuznetsov2016exceptionalisotropic}\cite{kuznetsov2006hyperplane}\cite{samokhin2001derived}  together with the references therein. 

For the exceptional groups $E_n$, $F_4$, the only known cases are the Cayley plane together with its dual \cite{faenzi2015derived}\cite{manivel2009derived}, the adjoint variety of type $F_4$ \cite{smirnov2021derived}, and the coadjoint variety of type $F_4$ obtained as a hyperplane section of the Cayley plane \cite{belmans2021derived}. The known cases have been shown to possess Lefschetz exceptional collections in the sense of \cite{kuznetsov2007homological}. 

In this article we investigate the case of $X = E_6/P_2$, the adjoint (and coadjoint) Grassmannian of type $E_6$. This note focuses on proving the following result.

\begin{mainthm}
Let $X = E_6/P_2$ be the adjoint Grassmannian of type $E_6$. The collection of vector bundles with starting block \[\mathbf{Q}_0 = \langle \O, \T, \S^{w_1}, \S^{2w_1}, \E, \F, \S^{3w_1}, \S^{4w_1}, \S^{5w_1} \rangle\]
supported on the partition $p = [9, 8, 7, 6, 6, 6, 6, 6, 6, 6, 6]$ is a Lefschetz exceptional collection for $\mathbf{D}^b(X)$ of maximal length. 
\end{mainthm}

\noindent \textbf{Organisation.} Section~\ref{sec:prelim} is dedicated to theoretical background. We introduce homogenous varieties and various homological calculations exemplified in the concrete case of $E_6/P_2$. Section~\ref{sec:3} demonstrates some technical calculations. In Section \ref{sec:main} we collect all the vector bundles necessary to build our exceptional collection and prove our main theorem using the tools developed in the first part of the paper. 


\section{Preliminaries}\label{sec:prelim}
We work over the field of complex numbers. Let $\A$ be a $\C$-linear triangulated category. In general, triangulated categories can have rather complicated structures. If a $\C$-linear category $\A$ possesses a full exceptional collection $E_1, E_2, \ldots, E_n$, then there is a unique filtration of $\A$ where each subquotient is a direct sum of shifts of the corresponding $E_i$. A full exceptional collecion is akin to a basis of the triangulated category. 

\begin{definition}
A sequence of full triangulated subcategories $\mathcal{A}_1, \mathcal{A}_2, \ldots, \A_n \subset \A$ is \emph{semiorthogonal} if for all $0\leq i < j\leq n$ and $E \in \A_i$, $F\in \A_j$, we have $\ext(E,F) = 0$. Let $\langle \A_1, \A_2, \ldots, \A_n\rangle$ be the smallest triangulated category containing all $\A_i$. If $\A = \langle \A_1, \A_2, \ldots, \A_n\rangle$, then the subcategories $\A_i$ form a \emph{semiorthogonal decomposition} of $\A$. 
\end{definition}

\begin{definition}
An object $E \in \A$ is \emph{exceptional} if $E$ is simple and has no non-trivial extensions. In symbols: $\ext(E,E) = \C$.  
\end{definition}

\begin{definition}\label{defexceptionalcollection}
A sequence of objects $ E_1, E_2, \ldots, E_n$ is called \emph{exceptional} if the following two conditions are satisfied:
\begin{enumerate}
    \item $E_i$ is an exceptional object for all $1 \leq i \leq n$,
    \item $\ext(E_i, E_j) = 0$ for all $i>j$.
\end{enumerate}

If the smallest subcategory containing the $E_i$'s coincides with $\A$, then we say that the sequence is \emph{full}. 
\end{definition}

\begin{definition}\label{lefschetzdef}
A \emph{Lefschetz collection} $\mathcal{M}$ with respect to some line bundle $\mathcal{L}$ is a collection of vector bundles consisting of a starting block $\mathbf{B}_0 = \langle E_0, E_1, \ldots, E_{p_0} \rangle$ together with a support partition $p = [p_0, p_1, \ldots, p_{l-1}]$, where $p_0 \geq p_1 \geq \ldots \geq p_{l-1} >0$ is a non-increasing sequence of positive integers and the collection $\mathcal{M}$ is organised into $l>0$ distinct blocks of length prescribed by the partition $p$ as such:
\[\underbrace{E_0, E_1, \ldots, E_{p_0}}_{\text{block }\mathbf{B}_0}, \underbrace{E_{0}\otimes \mathcal{L}, E_1 \otimes \mathcal{L},\ldots E_{p_1}\otimes \mathcal{L}}_{\text{block } \mathbf{B}_1},\ldots, \underbrace{E_0\otimes \mathcal{L}^{l-1}, \ldots E_{p_{l-1}}\otimes \mathcal{L}^{l-1}}_{\text{block } \mathbf{B}_{l-1}}.\]

If $p_0 = p_1 = \ldots = p_{l-1}$, then the collection is called \emph{rectangular}. 
\end{definition}

\begin{example} There are many examples of Lefschetz collections in the literature.

\begin{enumerate}
    \item Any exceptional collection is trivially a Lefschetz collection with one block. 
    \item For projective space $\P^n$, the standard Beilinson collection \[\O_{\P^n}, \O_{\P^n}(1), \O_{\P^n}(2), \ldots, \O_{\P^n}(n)\]
    is a Lefschetz collection with respect to $\O_{\P^n}(d)$ with support partition \[p = [\underbrace{d,d,\ldots,d}, r]\] where $n+1 = qd+r$ and $0<r\leq d$. 
    \item For the Grassmannian $\mathrm{Gr}(2,4)$, the collection \[\O, \mathcal{U}^*, S^2\mathcal{U}^*, \O(1), S^2\mathcal{U}^*, \O(2)\] is a Lefschetz collection with respect to $\O(1)$ with support partition \[p = [3,2,1],\]
    where $\mathcal{U}$ is the tautological bundle. The Grassmannian $\mathrm{Gr}(2,4)$ coincides with a quadric and its derived category of coherent sheaves was analysed in \cite{kapranov1988derived}.
    \item The Lefschetz collections for other generalised Grassmannians as cited in the prequel. 
\end{enumerate}
\end{example}
\begin{lemma}\label{lefscetzexceptional}
Let $\mathcal{M}$ be a Lefschetz collection with starting block $\mathbf{B}_0$ and partition $p$. The collection $\mathcal{M}$ is an exceptional collection if and only if the following two conditions are both satisfied:
\begin{enumerate}
    \item the starting block $\mathbf{B}_0$ is an exceptional collection,
    \item $\ext(E_m, E_n(-i)) = 0$ for $1 \leq i \leq l - 1$, $1 \leq m \leq p_{i}$, and $1 \leq n \leq p_0$.  
\end{enumerate}
\end{lemma}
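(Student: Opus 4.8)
The plan is to reduce every required vanishing to $\ext$-groups among the bundles $E_0,\dots,E_{p_0}$ of the starting block, using that tensoring by $\mathcal{L}$ is an autoequivalence of $\mathbf{D}^b(X)$ and therefore preserves $\ext$. For two members of $\mathcal{M}$, say $E_a\otimes\mathcal{L}^{j}$ in block $\mathbf{B}_j$ and $E_b\otimes\mathcal{L}^{k}$ in block $\mathbf{B}_k$, this gives
\[\ext(E_a\otimes\mathcal{L}^{j}, E_b\otimes\mathcal{L}^{k}) = \ext(E_a, E_b(k-j)),\]
so the whole problem is governed by the groups $\ext(E_a,E_b(t))$ with $0\le a,b\le p_0$ and twist $t$. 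In particular, every object of $\mathcal{M}$ is exceptional exactly when every $E_a$ is, since $\ext(E_a\otimes\mathcal{L}^{k}, E_a\otimes\mathcal{L}^{k})=\ext(E_a,E_a)$; this is part of condition~(1).

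Next I would organise the semiorthogonality requirement $\ext(A,B)=0$, imposed whenever $A$ follows $B$ in the ordering of Definition~\ref{defexceptionalcollection}, into two cases. If $A$ and $B$ sit in the same block $\mathbf{B}_k$, the displayed identity turns the condition into $\ext(E_a,E_b)=0$ for $a>b$; as $k$ varies these are all weaker than the one for the longest block $\mathbf{B}_0$, by the non-increasing hypothesis $p_0\ge p_1\ge\cdots$, and together with the previous paragraph they amount precisely to $\mathbf{B}_0$ being exceptional, i.e.\ condition~(1). If instead $A=E_a\otimes\mathcal{L}^{j}$ and $B=E_b\otimes\mathcal{L}^{k}$ with $j>k$, the identity yields $\ext(E_a,E_b(-i))$ with $i=j-k\ge1$; here $a\le p_j\le p_i$ and $b\le p_k\le p_0$, again by monotonicity of $p$, so every cross-block pair is subsumed by the family in condition~(2), attained extremally at $k=0$, $j=i$.

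Both implications then follow. For sufficiency, conditions~(1) and~(2) cover every same-block and every cross-block pair by the reductions above, and exceptionality of the individual objects is already secured. For necessity, $\mathbf{B}_0$ is a consecutive subcollection of $\mathcal{M}$ and hence exceptional in its own right, giving~(1), while the pairs with $k=0$ and $j=i$ are honest pairs inside $\mathcal{M}$ whose vanishing is exactly~(2). I expect the only delicate point to be the index bookkeeping: one must invoke the non-increasing property of the partition in the right place so that the extremal blocks genuinely dominate all the others, rather than any substantive homological input.
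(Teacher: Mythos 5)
Your proposal is correct and follows essentially the same route as the paper: both arguments rest on the twist-invariance of extension groups, $\ext(E_m(i), E_n(j)) = \ext(E_m, E_n(j-i))$, to reduce every same-block and cross-block semiorthogonality check to conditions (1) and (2), with the non-increasing partition ensuring the extremal cases dominate. Your write-up merely makes explicit the index bookkeeping that the paper's terse proof leaves implicit.
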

\begin{proof}
A Lefschetz exceptional collection is completely determined by its starting block $\mathbf{B}_0$. One can in fact construct subsequent blocks inductively: $\mathbf{B}_j = {}^\perp \mathbf{B}_0 \cap \mathbf{B}_{j-1}$. It then suffices to verify that all other blocks $\mathbf{B}_j$ are generated by subcollections of $\mathbf{B}_0$. So we only need to check semi-orthogonality between the objects of these collections. The conclusion follows from remarking the following property of extension groups: \[\ext(E_m(i), E_n(j)) = \ext(E_m, E_n(j-i)).\]
\end{proof}
\begin{remark}
Suppose $X$ is Fano and its canonical bundle is $w_X \simeq \O(-r)$, i.e. $r$ is the index of $X$. Then a Lefschetz exceptional collection with respect to $\O(1)$ must have at most $r$ blocks. This is because for an exceptional vector bundle $E$, Serre duality gives \[\ext(E \otimes \O(r), E) \simeq \mathrm{Ext}^{\mathrm{dim}(X) - \bullet}(E, E)^\vee \simeq  \C[-\mathrm{dim}(X)].\]

Thus the maximal length of a full Lefschetz collection is expected to be equal to the rank of the Grothendieck group and the length of the starting block is expected to be no more than the quotient of the rank of the Grothendieck group by the index of $X$. 

$\xqed$
\end{remark}

\subsection{Generalised Grassmannians}
Let $G$ be a connected simple algebraic group. We fix the chain of inclusions \[T \subset B \subset P \subset G\]
where $T$ is a maximal torus, $B$ is the Borel subgroup of $G$, and $P$ is a parabolic subgroup of $G$. Take $P = L P^u$ to be the Levi decomposition of $P$, where $L$ is the semi-simple part, and $P^u$ is the unipotent radical of $P$. In this setting, the quotient $G/P$ is a smooth projective variety, and there is an equivalence of monoidal categories between the category of $G$-equivariant coherent sheaves on $G/P$ and the category of finite dimensional representations of $P$. In symbols: \[\mathrm{Coh}^G(G/P) \simeq \mathrm{Rep}(P).\]

The unipotent radical $P^u$ acts trivially on irreducible $P$-modules, and thus such modules are completely determined by the action of the semi-simple part $L$. Since $L$ is reductive, irreducible representations of $P$ are described by the highest weight theory. 

The inclusion $L \to G$ determines an isomorphism of weight lattices $P_L \simeq P_G$. We denote by $P^+_L$, $P^+_G$ the semigroups of dominant weights of $L$ and $G$, respectively. The inclusion also induces a monomorphism of the corresponding Weyl groups $W_L \to W_G$. Denoting by $\Delta_G$ the root system of $G$ and by $\alpha_i$ the simple roots, then $W_G$ is generated by all the simple reflections $s_{\alpha_i}$ corresponding to the simple roots. For $P$ a maximal parabolic subgroup, the root system of $L$, $\Delta_L$, is given by the simple roots $\alpha_i$ of $G$ with the exception of a prescribed $\alpha_k$ for some $k$ and the Weyl group $W_L$ is generated by the corresponding subset of simple reflections. 

We call $\rho_G$ the sum of fundamental weights of $G$, $w_0$ the longest element of $W_G$, and $w_0^L$ the longest element of $W_L$. They satisfy $(w_0)^2 = (w_0^L)^2 = 1$. Moreover, $w_0$ takes any simple root to the negative of a simple root, and similarly any fundamental weight to the negative of a fundamental weight.

\begin{definition}
A weight $\mu \in P_G$ is \emph{singular} if $\mu$ is orthogonal to some root, or equivalently if $\mu$ lies on a wall of some Weyl chamber of the action of $W$ on $P_G$. A weight $\mu = c_1w_1 + c_2w_2 + c_3w_3 +c_4w_4+c_5w_5+c_6w_6$ is called \emph{dominant} if $c_i \geq 0$ for all $1 \leq i \leq 6$. 
\end{definition}

For a dominant weight $\lambda \in P^+_G$, we denote by $V^\lambda$ the corresponding irreducible representation of $G$. For a dominant weight $\mu \in P^+_L$, we denote by $V_L^\mu$ the irreducible representation of $L$ and by $\S^\mu$ the corresponding $G$-equivariant vector bundle over $G/P$. 

We record the following useful facts.

\begin{lemma}\label{duals} The following hold:
\begin{enumerate}
    \item The dual of a $G$-equivariant vector bundle is given by $(\S^\mu)^\vee \simeq \S^{-w_0^L \cdot \mu}$.
    \item If $V_L^{\lambda} \otimes V_L^\mu = \oplus V_L^\nu$ then $\S^\lambda \otimes \S^\mu = \oplus \S^\nu$. 
\end{enumerate}
\end{lemma}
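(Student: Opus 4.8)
The plan is to deduce both statements directly from the monoidal nature of the equivalence $\mathrm{Coh}^G(G/P) \simeq \mathrm{Rep}(P)$ recorded above, reducing everything to standard representation theory of the reductive Levi $L$. Concretely, under this equivalence the bundle $\S^\mu$ is the homogeneous vector bundle $G \times_P V_L^\mu$ associated to the $P$-module $V_L^\mu$, where $V_L^\mu$ is regarded as a $P$-representation by letting the unipotent radical $P^u$ act trivially, i.e. via the inflation functor $\mathrm{Rep}(L) \to \mathrm{Rep}(P)$. The point I would emphasise is that the associated-bundle functor $V \mapsto G \times_P V$ is a tensor functor: it is additive and carries tensor products to tensor products and duals to duals, which is precisely the content of the word ``monoidal'' in the stated equivalence. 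Once this is in hand, both parts become assertions about $\mathrm{Rep}(L)$.

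For part (2), I would argue as follows. Since both $V_L^\lambda$ and $V_L^\mu$ are inflated from $L$, so is their tensor product, and as an $L$-module it decomposes into a direct sum of irreducibles $V_L^\lambda \otimes V_L^\mu = \bigoplus V_L^\nu$ because $L$ is reductive and we work over $\C$. Applying the additive monoidal functor then gives $\S^\lambda \otimes \S^\mu \simeq G \times_P (V_L^\lambda \otimes V_L^\mu) \simeq \bigoplus (G \times_P V_L^\nu) = \bigoplus \S^\nu$, as required.

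For part (1), I would use that the monoidal functor sends the dual $P$-module to the dual bundle, so that $(\S^\mu)^\vee$ is the bundle associated to $(V_L^\mu)^\vee$. It then remains to identify the highest weight of the dual of an irreducible $L$-representation. Here I would invoke the standard fact that the lowest weight of $V_L^\mu$ is $w_0^L(\mu)$, whence the weights of $(V_L^\mu)^\vee$ are the negatives of those of $V_L^\mu$ and its highest weight is $-w_0^L(\mu)$; this is dominant for $L$ precisely because $w_0^L(\mu)$ is $L$-antidominant. Thus $(V_L^\mu)^\vee \simeq V_L^{-w_0^L(\mu)}$ and hence $(\S^\mu)^\vee \simeq \S^{-w_0^L(\mu)}$. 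I should note that the ``$\cdot$'' appearing in the statement denotes the ordinary linear action of $W_L$ on the weight lattice rather than the shifted (dot) action, since it is the linear action that governs duality.

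None of the individual steps is genuinely hard; the lemma is essentially a bookkeeping consequence of monoidality. The one place that deserves care, and the step I would treat as the main obstacle, is making sure the inflation from $L$ to $P$ is handled correctly: one must check that tensor products and duals computed in $\mathrm{Rep}(P)$ restrict to the expected operations in $\mathrm{Rep}(L)$, which holds because $P^u$ acts trivially throughout, and that the decomposition into irreducible $P$-modules coincides with the decomposition into irreducible $L$-modules. With those compatibilities in place, both claims follow immediately.
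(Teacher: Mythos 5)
Your proof is correct and takes essentially the same approach as the paper, whose entire proof is the one-line observation that the claims follow from the monoidal equivalence $\mathrm{Coh}^G(G/P) \simeq \mathrm{Rep}(P)$. Your write-up simply supplies the standard details the paper leaves implicit (inflation from $L$ to $P$ via the trivial $P^u$-action, and the identification $(V_L^\mu)^\vee \simeq V_L^{-w_0^L(\mu)}$ with the linear, not dot, action), all of which are handled correctly.
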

\begin{proof}
This results from the equivalence $\mathrm{Coh}^G(G/P) \simeq \mathrm{Rep}(P)$ as monoidal categories.
\end{proof}

\begin{theorem}[Borel-Weil-Bott]\label{bwb} Fix a dominant weight $\mu \in P^+_L$ and its associated $G$-equivariant vector bundle $\S^\mu$. If $\mu + \rho_G$ lies on a wall of a Weyl chamber for the action of $W_G$, then we have that \[\mathrm{H}^\bullet(G/P, \S^\mu) = 0.\]

Otherwise, there exists a unique element of the Weyl group $w \in W_G$ such that $w\cdot (\mu +\rho_G)$ is a dominant weight and \[\mathrm{H}^\bullet(G/P, \S^\mu) = V^{w\cdot (\mu+\rho_G) - \rho_G}[-l(w)],\]
where $l : W_G \to \Z$ is the length function. 
\end{theorem}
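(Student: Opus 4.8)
The plan is to reduce the statement on $G/P$ to the corresponding statement for line bundles on the full flag variety $G/B$, and then to establish the latter by induction on the length of a Weyl group element. I write $B_L = B \cap L$ for the Borel subgroup of the Levi factor and let $\mathcal{L}_\mu$ denote the line bundle on $G/B$ attached to the weight $\mu$ under the equivalence $\mathrm{Coh}^G(G/B) \simeq \mathrm{Rep}(B)$. First I would consider the projection $\pi \colon G/B \to G/P$, whose fibre is $P/B \cong L/B_L$, the full flag variety of $L$. Since $\mu \in P_L^+$ is dominant for $L$, Borel--Weil for the reductive group $L$ applied on the fibres gives $R^q \pi_* \mathcal{L}_\mu = 0$ for $q>0$ and $\pi_* \mathcal{L}_\mu \cong \S^\mu$; here cohomology and base change identify the direct image with the $G$-equivariant bundle associated to $V_L^\mu$. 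The Leray spectral sequence then degenerates and yields $\mathrm{H}^\bullet(G/P, \S^\mu) \cong \mathrm{H}^\bullet(G/B, \mathcal{L}_\mu)$, reducing the whole theorem to the flag variety.

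On $G/B$ I would first treat the base case of a dominant weight $\lambda \in P_G^+$. One identifies global sections of $\mathcal{L}_\lambda$ with regular functions $f \colon G \to \C$ satisfying $f(gb) = \lambda(b)^{-1} f(g)$, and the algebraic Peter--Weyl theorem assembles these into the irreducible module $V^\lambda$. For the higher cohomology I would use that the canonical bundle of $G/B$ corresponds to $-2\rho_G$, so that $\mathcal{L}_\lambda \cong \mathcal{L}_{\lambda + 2\rho_G} \otimes \w_{G/B}$ with $\mathcal{L}_{\lambda + 2\rho_G}$ ample; Kodaira vanishing then gives $\mathrm{H}^{>0}(G/B, \mathcal{L}_\lambda) = 0$.

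For an arbitrary weight I would argue by induction using the dot action $w \cdot \lambda := w(\lambda + \rho_G) - \rho_G$. If $\lambda + \rho_G$ is regular, I choose a simple root $\alpha_i$ with $\langle \lambda + \rho_G, \alpha_i^\vee \rangle < 0$ and factor through the $\P^1$-bundle $p_i \colon G/B \to G/P_{\alpha_i}$. The rank-one ($SL_2$) computation on the fibres exchanges $R^0$ and $R^1$, replaces $\lambda$ by $s_{\alpha_i} \cdot \lambda$, and raises cohomological degree by one, so that $\mathrm{H}^\bullet(G/B, \mathcal{L}_\lambda) \cong \mathrm{H}^{\bullet-1}(G/B, \mathcal{L}_{s_{\alpha_i} \cdot \lambda})$. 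Iterating moves $\lambda + \rho_G$ into the dominant chamber after $l(w)$ steps and, combined with the base case, gives $\mathrm{H}^\bullet(G/B, \mathcal{L}_\lambda) = V^{w \cdot \lambda}[-l(w)]$; uniqueness of $w$ is simple transitivity of $W_G$ on the regular chambers. If instead $\lambda + \rho_G$ lies on a wall, some simple root satisfies $\langle \lambda + \rho_G, \alpha_i^\vee \rangle = 0$, and the same $SL_2$ computation kills both $R^0 p_{i*}$ and $R^1 p_{i*}$, forcing $\mathrm{H}^\bullet(G/B, \mathcal{L}_\lambda) = 0$. Feeding these outcomes back through the reduction with $\lambda = \mu$ proves the theorem.

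I expect the main obstacle to be the inductive step on $G/B$: one must check that the interplay between the $\rho_G$-shifted Weyl action and the cohomological degree shift is tracked correctly through each $\P^1$-fibration, and in particular that the singular case yields uniform vanishing rather than mere cancellation in a spectral sequence. Keeping the conventions straight — the identification of $\mathcal{L}_\mu$ versus $\mathcal{L}_{-\mu}$, and of $V^\lambda$ versus its dual via Lemma~\ref{duals} — is a persistent, if routine, source of error that I would pin down before running the induction.
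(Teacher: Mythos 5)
The paper itself offers no proof of Theorem~\ref{bwb}: it is the classical Borel--Weil--Bott theorem, stated as background and used as a black box throughout, so there is no internal argument to compare against. Your proposal is the standard textbook proof (reduction to the full flag variety, then Demazure-style induction over $\P^1$-fibrations), and in outline it is correct: the pushforward computation $\pi_*\mathcal{L}_\mu \simeq \S^\mu$ with vanishing higher direct images (using that $\mu$ is $L$-dominant) is exactly how the statement on $G/P$ follows from the one on $G/B$; the base case via Peter--Weyl plus Kodaira vanishing is fine in characteristic zero; and your degree bookkeeping in the inductive step (each reflection toward the dominant chamber raises cohomological degree by one, producing the shift $[-l(w)]$, with uniqueness of $w$ from simple transitivity of $W_G$ on regular chambers) is the right one.

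One step, as literally stated, is wrong and needs repair. In the singular case you assert that if $\mu+\rho_G$ lies on a wall, then some \emph{simple} root satisfies $\langle \mu+\rho_G, \alpha_i^\vee\rangle = 0$. That is false in general: a singular weight is orthogonal to some positive root, not necessarily a simple one. For instance, in type $A_2$ the weight $\omega_1-\omega_2$ pairs to zero with the highest coroot $\alpha_1^\vee+\alpha_2^\vee$ but to $\pm 1$ with each simple coroot. The correct argument runs the same reflection process as in the regular case: move $\mu+\rho_G$ toward the dominant chamber one simple reflection at a time; at each step the pairing with the chosen simple root is either negative (apply the shift isomorphism) or zero (conclude vanishing there and propagate it back through the chain of isomorphisms). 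Since a singular weight can never become regular dominant, a zero pairing must eventually occur; equivalently, one can use that a \emph{dominant} singular weight is automatically orthogonal to a simple root and chain the isomorphisms up to that point. With this fix, and keeping the duality conventions ($\mathcal{L}_\mu$ versus $\mathcal{L}_{-\mu}$, $V^\lambda$ versus its dual) consistent as you yourself flag, the proof is complete. Incidentally, this repaired induction is precisely the algorithm the paper uses computationally in Remark~\ref{remarkacyclic} and Example~\ref{acyclicexample}.
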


\subsection{The case X = E6/P2}
In particular, we work with the exceptional group $G = E_6$ and the maximal parabolic subgroup $P = P_2$ which corresponds to the second vertex of the Dynkin diagram
\[\dynkin[labels={1,2,3,4,5,6}, scale=2.5] E{o*oooo}\]
where we use the Bourbaki ordering on the vertices. Then the homogeneous space
\[X = G/P = E_6/P_2\]
is a smooth projective Fano variety of dimension $21$. 

We can realise the weight lattice $P_G$ of $G = E_6$ inside $\Q^8$ in the following manner. Let $\eps_1, \eps_2, \ldots, \eps_8$ be a basis of $\Q^8$. Then the simple roots of $G$ are given by 
\begin{align*}
    &\alpha_1 = \frac{1}{2}(\eps_1 + \eps_8) - \frac{1}{2}(\eps_2 +\eps_3+\eps_4 +\eps_5+\eps_6+\eps_7),\\
    &\alpha_2 = \eps_1 + \eps_2, \quad \quad \alpha_3 = \eps_2 - \eps_1,\\
    &\alpha_4 = \eps_3 - \eps_2, \quad \quad \alpha_5 = \eps_4 - \eps_3,\\
    &\alpha_6 = \eps_5 - \eps_4.
\end{align*}
The space spanned by $\alpha_1, \ldots, \alpha_6$ is orthogonal to the plane generated by $\eps_7 + \eps_8$ and $\eps_6 + \eps_7 + 2\eps_8$. These six simple roots generate the root system $\Delta_G$. 

The fundamental weights can be written in terms of the simple roots as such
\begin{align*}
    &\omega_1  = \frac{1}{3}(4\alpha_1 +3\alpha_2 +5\alpha_3+6\alpha_4+4\alpha_5+2\alpha_6,\\
    &\omega_2 = \alpha_1+2\alpha_2 +2\alpha_3 + 3\alpha_4 +2\alpha_5 +\alpha_6,\\
    &\omega_3 = \frac{1}{3}(5\alpha_1 + 6\alpha_2 +10\alpha_3+12\alpha_4 +8\alpha_5 +4\alpha_6),\\
    &\omega_4 = 2\alpha_1 + 3\alpha_2+4\alpha_3 + 6\alpha_4 + 4\alpha_5 + 2\alpha_6,\\
    &\omega_5 = \frac{1}{3}(4\alpha_1 + 6\alpha_2 + 8\alpha_3 + 12\alpha_4 + 10\alpha_5 + 5\alpha_6),\\
    &\omega_6 = \frac{1}{3}(2\alpha_1 + 3\alpha_2 + 4\alpha_3 + 6\alpha_4 + 5\alpha_5 + 4\alpha_6).
\end{align*}

The sum of fundamental weights is 
\[\rho_G = 8\alpha_1 + 11\alpha_2 + 15\alpha_3 +21\alpha_4 +15\alpha_5 + 8\alpha_6.\]

The longest elements of the Weyl groups $W_G$ and $W_L$ can be expressed as
\begin{align*}
    w_0&= (-1,-1,-1,-1,-1,-1),  \quad \quad \quad w_0^L = (-1,10,-1,-1,-1,-1).
\end{align*}

The Picard group of $X$ is generated by a single line bundle: $\mathrm{Pic}\ X = \Z \langle \O(1) \rangle$, where we employ the following notation for $i \in \Z$
\begin{align*}
    \O(1) = \S^{\omega_2}, \quad \quad \O(i) = \S^{i\omega_2}, \quad \quad \S^{\mu +iw_2} = \S^\mu(i).
\end{align*}

The following lemma tells us how to compute the canonical class of homogeneous spaces.
\begin{lemma}\label{canonical}
Let $X = G/P$ be a homogeneous space as in our setting. Then the canonical bundle is given by \[\omega_X = \S^{w_0^Lw_0\rho_G - \rho_G}.\] 
Equivalently, we have $w_X = \S^{-rw_2} = \O(-r)$ where $r = \langle \rho_G - w_0^Lw_0\rho_G, \alpha_k\rangle / \langle w_k, \alpha_k\rangle$, where $\langle -, - \rangle$ is the standard Euclidean product on the weight lattice $P_G = P_L$ and $k$ is the vertex of the Dynkin diagram of $G$ corresponding to the chosen maximal parabolic subgroup $P$. 
\end{lemma}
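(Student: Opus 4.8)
The plan is to identify $\omega_X$ as a $G$-equivariant line bundle and read off the weight labelling it. Since $X=G/P$ is homogeneous, its tangent bundle is associated to the isotropy $P$-representation $\mathfrak{g}/\mathfrak{p}$, where $\mathfrak{g},\mathfrak{p}$ are the Lie algebras of $G,P$. Writing $\mathfrak{p}=\mathfrak{l}\oplus\mathfrak{n}_P$ as the Levi plus the positive nilradical and decomposing $\mathfrak{g}$ into root spaces, the quotient $\mathfrak{g}/\mathfrak{p}$ has weights $\{-\alpha:\alpha\in\Delta_G^+\setminus\Delta_L^+\}$. Passing to the top exterior power of the cotangent representation, $\omega_X=\det(T^*X)$ is a $G$-equivariant line bundle, and in the paper's notation it equals $\S^{-2\rho^P}$, where $2\rho^P:=\sum_{\alpha\in\Delta_G^+\setminus\Delta_L^+}\alpha=2(\rho_G-\rho_L)$ and $\rho_L$ denotes the half-sum of the positive roots of the Levi $L$. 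The sign is pinned down by the normalisation $\O(1)=\S^{\omega_2}$ and is consistent with the familiar special case $\omega_{G/B}=\S^{-2\rho_G}$ obtained when $P=B$.

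It then remains to match $-2\rho^P$ with the stated weight $w_0^Lw_0\rho_G-\rho_G$. I would use three facts: $w_0\rho_G=-\rho_G$; $w_0^L\rho_L=-\rho_L$; and $w_0^L\rho^P=\rho^P$. Only the last requires an argument: every simple reflection $s_{\alpha_i}$ with $i\neq k$ lies in $W_L$ and, adding only multiples of $\alpha_i$, leaves the $\alpha_k$-coefficient of any root unchanged; hence $W_L$ permutes $\Delta_G^+\setminus\Delta_L^+$ and fixes its half-sum $\rho^P$. Writing $\rho_G=\rho_L+\rho^P$ and combining these identities gives $w_0^Lw_0\rho_G=-w_0^L\rho_G=-(-\rho_L+\rho^P)=\rho_L-\rho^P$, so that $w_0^Lw_0\rho_G-\rho_G=-2\rho^P$, which is exactly the weight of $\omega_X$ computed above.

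For the equivalent description $\omega_X=\O(-r)$, I would first observe that the weight $\chi:=w_0^Lw_0\rho_G-\rho_G=-2\rho^P$ is a multiple of the fundamental weight $\omega_k$ (here $k=2$). Indeed, the same $W_L$-invariance yields $\langle\rho^P,\alpha_i^\vee\rangle=0$ for every $i\neq k$, so $\rho^P$, and hence $\chi$, is orthogonal to all simple coroots except $\alpha_k^\vee$; since $\Pic X=\Z\langle\O(1)\rangle$ with $\O(1)=\S^{\omega_2}$, this forces $\chi=-r\omega_k$ for an integer $r$. To extract $r$ I would pair $\chi$ with $\alpha_k$ in the Euclidean product: because the type $E_6$ is simply laced, $\langle\omega_j,\alpha_k\rangle$ vanishes for $j\neq k$, whence $r=\langle-\chi,\alpha_k\rangle/\langle\omega_k,\alpha_k\rangle=\langle\rho_G-w_0^Lw_0\rho_G,\alpha_k\rangle/\langle\omega_k,\alpha_k\rangle$ and $\omega_X=\S^{-r\omega_k}=\O(-r)$. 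Positivity of $r$, as befits the Fano variety $X$, follows since $\rho^P$ is a positive combination of positive roots.

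The main obstacle is less any single computation than pinning down the sign and normalisation in the first step: one must track carefully how the character of the isotropy representation corresponds to the label of $\S^{(-)}$, since an error there replaces $\omega_X$ by its inverse. Once the convention is anchored by the test case $P=B$ (equivalently by the ample generator $\O(1)=\S^{\omega_2}$), the remaining Weyl-group identities are routine, and the specialisation to $E_6/P_2$ reduces to explicit arithmetic with the weights recorded above.
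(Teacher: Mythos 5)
The paper itself states Lemma~\ref{canonical} \emph{without} proof, treating it as a standard fact about homogeneous spaces, so your argument is not paralleling a proof in the text but supplying the missing one; judged on its own, it is correct. Your route is the standard one: identify $\omega_X=\det(T^\ast X)$ as the $G$-equivariant line bundle of weight $-2\rho^P$, where $2\rho^P=\sum_{\alpha\in\Delta_G^+\setminus\Delta_L^+}\alpha=2(\rho_G-\rho_L)$; prove $w_0^L\rho^P=\rho^P$ by observing that each $s_{\alpha_i}$ with $i\neq k$ permutes the non-parabolic positive roots; combine this with $w_0\rho_G=-\rho_G$, $w_0^L\rho_L=-\rho_L$ and $\rho_G=\rho_L+\rho^P$ to get $w_0^Lw_0\rho_G-\rho_G=-2\rho^P$; then use $\langle\rho^P,\alpha_i^\vee\rangle=0$ for $i\neq k$ to write $-2\rho^P=-r\omega_k$ and extract $r$ by pairing with $\alpha_k$, which reproduces exactly the stated formula. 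The only soft spot is the sign in the very first step, which you yourself flag: if the isotropy representation $\mathfrak{g}/\mathfrak{p}$ is assigned the weights $\{-\alpha\}$, then $\det(T^\ast X)$ would naively come out as $\S^{+2\rho^P}$, and you fix the sign only by appeal to consistency with $\O(1)=\S^{\omega_2}$ and with $\omega_{G/B}=\S^{-2\rho_G}$. That is defensible, but you can make it unambiguous by invoking the paper's own convention, visible in the proof of Lemma~\ref{tangentbundle}: there $T_X=\mathfrak{n}^\vee=\bigoplus_{\beta\in\Psi}\mathfrak{g}_\beta$ carries the \emph{positive} non-parabolic roots as weights (its associated graded is $\O(1)\oplus\S^{w_4}(-1)$), so $\det T_X=\S^{2\rho^P}$ and $\omega_X=\S^{-2\rho^P}$ with no ambiguity left. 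Two minor further remarks: the appeal to simple-lacedness for $\langle\omega_j,\alpha_k\rangle=0$, $j\neq k$, is unnecessary, since this holds in every type because $\alpha_k^\vee$ is proportional to $\alpha_k$; and your closing positivity claim for $r$ needs one extra line (e.g.\ pair $\rho^P=c\,\omega_k$ with $\omega_k$ and use $\langle\beta,\omega_k\rangle>0$ for $\beta\in\Delta_G^+\setminus\Delta_L^+$), although positivity is not actually part of the lemma's assertion.
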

\begin{lemma}
For $X = G/P$ where $G = E_6$ and $P = P_2$, the canonical bundle is \[\omega_X = \O(-11).\]
\end{lemma}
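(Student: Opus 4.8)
The plan is to apply the general formula of the preceding lemma directly in the case $k=2$ and reduce the whole statement to one explicit computation in the fundamental-weight basis. By Lemma~\ref{canonical} we have $\omega_X = \S^{w_0^L w_0 \rho_G - \rho_G}$, so it suffices to evaluate the weight $w_0^L w_0 \rho_G - \rho_G$ and recognise it as a multiple of $\omega_2$. Since $\Pic X = \Z\langle\O(1)\rangle$ with $\O(1) = \S^{\omega_2}$, producing the coefficient $-11$ on $\omega_2$ is exactly what is required.

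First I would compute $w_0\rho_G$. Because $\rho_G = \sum_{i=1}^{6}\omega_i$ is the sum of all fundamental weights and $w_0$ carries each fundamental weight to the negative of a fundamental weight (permuting them by the diagram involution), the sum is invariant under that permutation and we get $w_0\rho_G = -\rho_G$, which in fundamental-weight coordinates is precisely the recorded datum $w_0 = (-1,-1,-1,-1,-1,-1)$. Applying $w_0^L$ and using linearity then gives $w_0^L w_0 \rho_G = -\,w_0^L\rho_G$. Feeding in the recorded action $w_0^L\rho_G = (-1,10,-1,-1,-1,-1)$ yields
\[ w_0^L w_0 \rho_G - \rho_G = -\,w_0^L\rho_G - \rho_G = (1,-10,1,1,1,1) - (1,1,1,1,1,1) = -11\,\omega_2, \]
so that $\omega_X = \S^{-11\omega_2} = \O(-11)$. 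As a cross-check, the equivalent index formula of Lemma~\ref{canonical} gives $\rho_G - w_0^L w_0\rho_G = 11\omega_2$ and hence $r = \langle 11\omega_2,\alpha_2\rangle/\langle\omega_2,\alpha_2\rangle = 11$, in agreement with the expected Fano index of $E_6/P_2$.

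The only genuine obstacle is confirming the action of $w_0^L$ on $\rho_G$; everything else is formal linear algebra. The hard part here is that $w_0^L$ is the longest element of the Weyl group of the Levi, whose semisimple part is the $A_5$ subsystem on the nodes $\{1,3,4,5,6\}$, and it acts as $-1$ composed with the order-two automorphism reversing the $A_5$ chain rather than as $-1$ itself, while fixing the $\omega_2$-direction. I would verify the stated coordinates either by writing $w_0^L$ as the appropriate reduced product of the simple reflections $s_{\alpha_1}, s_{\alpha_3}, s_{\alpha_4}, s_{\alpha_5}, s_{\alpha_6}$ and applying it to $\rho_G$, or by passing to the $\Q^8$ model, decomposing $\rho_G$ into its $L$-part and its nilradical part and using $w_0^L\rho_L = -\rho_L$ together with the known action on the remaining roots. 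Both routes are routine but must be carried out carefully, since a sign or permutation error in $w_0^L$ is the one place the computation could go wrong.
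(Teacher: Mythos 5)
Your proof is correct and takes essentially the same approach as the paper: both apply Lemma~\ref{canonical} with $k=2$ and reduce everything to the computation $\rho_G - w_0^L w_0 \rho_G = 11\omega_2$, using $w_0\rho_G = -\rho_G$ and the recorded value of $w_0^L\rho_G$. The only cosmetic difference is that you evaluate the weight $w_0^Lw_0\rho_G-\rho_G = -11\omega_2$ directly (keeping the index formula as a cross-check), whereas the paper plugs into the index formula $r = \langle \rho_G - w_0^Lw_0\rho_G, \alpha_2\rangle / \langle \omega_2, \alpha_2\rangle$ with $\langle \omega_2,\alpha_2\rangle = \tfrac{1}{2}$.
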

\begin{proof}
Take $k=2$ in the previous lemma. We have $\langle w_2, \alpha_2 \rangle = \frac{1}{2}$ and $\rho_G - w_0^Lw_0\rho_G = (0,11,0,0,0,0)$. This immediately gives $r = 11$. 
\end{proof}

\subsection{Tensor products}
We next discuss how to compute tensor products of vector bundles over $X$. The discussion here is a summary of the methods employed in \cite{belmans2019hochschild}\cite{smirnov2021derived}. 

The Levi subgroup $L \subset P$ decomposes as the product $L \simeq L' \times Z(L)$, where $L' \subset L$ is the derived group and $Z(L)\subset L$ is the centre. The derived group $L'$ is a connected semi-simple algebraic group and its Dynkin diagram is obtained by removing the vertex $\alpha_k$ in the Dynkin diagram of $G$ which corresponds to the chosen maximal parabolic subgroup $P$. 

In our specific case, $\alpha_k = \alpha_2$ and the Levi subgroup decomposes as $L \simeq A_5 \times \C^*$.

The inclusions $L' \hookrightarrow L$ and $Z(L) \hookrightarrow L$ induce the following morphisms on weight lattices
\begin{align*}
   f_{L'}: P_L &\rightarrow P_{L'} \quad \quad \quad \quad \quad \quad \quad \ \ f_{Z(L)}: P_L \rightarrow P_{Z(L)}\\
               w_i &\mapsto w_i' \text{ if } i\neq k  \quad \quad \quad \mu = \sum_i c_i w_i \mapsto r_\mu w_k\\
               w_k &\mapsto 0 \quad \quad \quad \quad \quad \quad \quad \quad \text{where } r_\mu = \frac{\langle \mu, w_k \rangle}{\langle w_k, w_k \rangle}.
\end{align*}
We can also define a lifting map $P_{L'} \to P_L$ which sends $\w'_i \mapsto w_i$ for all $i\neq k$. 

Every representation of $L$ can be expressed as a suitable pair of representations of $L'$ and $Z(L)$. For a highest weight, irreducible representation $V_L^\mu$, the restrictions $\mathrm{Res}^L_{L'}(V_L^\mu)$ and $\mathrm{Res}^L_{Z(L)}(V_L^\mu)$ are both irreducible. Moreover, we know that $\mathrm{Res}^L_{L'}(V_L^\mu) = V_{L'}^{\mu'}$, where $\mu' = f_{L'}(\mu)$ and $\mathrm{Res}^L_{Z(L)}(V_L^\mu)$ is given by a character of the torus $Z(L) \simeq \C^*$. 

The following lemma explains how to exploit this simplification to compute tensor products of vector bundles over $X$. 

\begin{lemma}\label{tensorproductlemma}
Let $V_L^\mu, V_L^\nu$ be highest weight irreducible representations of $L$ and let $V_{L'}^{\mu'}, V_{L'}^{\nu'}$ be the restrictions to the derived group $L'$. In that case, if \[V_{L'}^{\mu'} \otimes V_{L'}^{\nu'} = \bigoplus_{\sigma'}(V_{L'}^{\sigma'})^{m(\mu', \nu', \sigma')}\]
then we have 
\[\S^\mu \otimes \S^\nu = \bigoplus_{\sigma'}(\S^{\sigma + (r_\mu +r_\nu - r_\sigma)w_k})^{m(\mu', \nu',\sigma')},\]
where $\sigma$ is the lift of $\sigma'$ in $P_L$, $r_\mu,r_\nu,r_\sigma$ are the constants defined by the map $f_{Z(L)}$, and $m(\mu', \nu',\sigma')$ is the multiplicity of the module in the tensor product decomposition. 
\end{lemma}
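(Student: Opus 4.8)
The plan is to prove Lemma~\ref{tensorproductlemma} by tracking how the decomposition $L \simeq L' \times Z(L)$ interacts with the tensor structure of $\mathrm{Rep}(P) \simeq \mathrm{Coh}^G(G/P)$. First I would invoke part (2) of Lemma~\ref{duals}, which reduces the computation of $\S^\mu \otimes \S^\nu$ entirely to the computation of $V_L^\mu \otimes V_L^\nu$ inside $\mathrm{Rep}(L)$; from there the whole argument lives on the representation-theoretic side and the only remaining task is to read off the correct highest weights of the summands in $P_L$.

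The key step is to use the fact that for $L \simeq L' \times Z(L)$ an irreducible $L$-module factors as an external tensor product $V_L^\mu \simeq V_{L'}^{\mu'} \boxtimes \chi_{r_\mu}$, where $\mu' = f_{L'}(\mu)$ is the $L'$-highest weight and $\chi_{r_\mu}$ is the character of the central torus $Z(L) \simeq \C^*$ determined by $r_\mu = \langle \mu, w_k\rangle / \langle w_k, w_k\rangle$. Since tensoring is computed componentwise on a direct product of groups, I would write
\[
V_L^\mu \otimes V_L^\nu \simeq \bigl(V_{L'}^{\mu'} \otimes V_{L'}^{\nu'}\bigr) \boxtimes \bigl(\chi_{r_\mu} \otimes \chi_{r_\nu}\bigr).
\]
The central characters of a one-dimensional torus simply add, so $\chi_{r_\mu}\otimes\chi_{r_\nu} = \chi_{r_\mu + r_\nu}$. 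Plugging in the assumed $L'$-decomposition $V_{L'}^{\mu'}\otimes V_{L'}^{\nu'} = \bigoplus_{\sigma'} (V_{L'}^{\sigma'})^{m(\mu',\nu',\sigma')}$ then gives
\[
V_L^\mu \otimes V_L^\nu \simeq \bigoplus_{\sigma'} \bigl(V_{L'}^{\sigma'} \boxtimes \chi_{r_\mu + r_\nu}\bigr)^{m(\mu',\nu',\sigma')}.
\]

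The remaining bookkeeping step, and the point requiring the most care, is to identify each summand $V_{L'}^{\sigma'}\boxtimes \chi_{r_\mu+r_\nu}$ as a genuine irreducible $L$-module of the form $V_L^{\tilde\sigma}$ and to compute its highest weight $\tilde\sigma \in P_L$. Lifting $\sigma'$ via the map $\w_i' \mapsto w_i$ produces a weight $\sigma \in P_L$ whose $L'$-restriction is correct, namely $f_{L'}(\sigma) = \sigma'$, but whose central character is $r_\sigma$ rather than the required $r_\mu + r_\nu$. I would correct the central part by adding the appropriate multiple of $w_k$: since $f_{L'}(w_k) = 0$ and $f_{Z(L)}(w_k) = w_k$ (equivalently $r_{w_k} = 1$), the weight $\tilde\sigma = \sigma + (r_\mu + r_\nu - r_\sigma)w_k$ has the same $L'$-restriction $\sigma'$ and the corrected central character $r_\mu + r_\nu$, so $V_L^{\tilde\sigma} \simeq V_{L'}^{\sigma'}\boxtimes \chi_{r_\mu+r_\nu}$. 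Translating back through $\mathrm{Coh}^G(G/P) \simeq \mathrm{Rep}(P)$ via Lemma~\ref{duals}(2) yields exactly the stated formula $\S^\mu \otimes \S^\nu = \bigoplus_{\sigma'}(\S^{\sigma + (r_\mu + r_\nu - r_\sigma)w_k})^{m(\mu',\nu',\sigma')}$. The main obstacle is purely notational rather than conceptual: one must verify that the weight-lattice isomorphism $P_L \simeq P_{L'} \times P_{Z(L)}$ induced by $(f_{L'}, f_{Z(L)})$ is compatible with the lifting map so that the correction term lands in $P_L$ and indexes an honest irreducible, which is where the orthogonality normalisation of $w_k$ enters.
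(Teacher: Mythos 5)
Your proposal is correct and takes essentially the same approach as the paper: the paper's two-sentence proof asserts exactly that the $L'$-decomposition determines the $L$-decomposition and then invokes the equivalence $\mathrm{Coh}^G(G/P)\simeq\mathrm{Rep}(P)$, which is precisely what you establish. Your external-tensor-product factorisation $V_L^\mu\simeq V_{L'}^{\mu'}\boxtimes\chi_{r_\mu}$ and the central-character correction $(r_\mu+r_\nu-r_\sigma)w_k$ (using $f_{L'}(w_k)=0$ and $r_{w_k}=1$) simply supply the bookkeeping the paper leaves implicit.
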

\begin{proof}
The decomposition of the tensor product of representations of $L'$ determines a decomposition of the tensor product of the corresponding representations of $L$. By the equivalence of categories stated earlier, these irreducible $P$-modules correspond to $G$-equivariant coherent sheaves.
\end{proof}

In our case, such calculations reduce to decomposing tensor products of representations of $A_5$. We have the following map at the level of Dynkin diagrams
\begin{align*}
    E_6 :\ \ \dynkin[labels={w_1, w_2, w_3, w_4, w_5, w_6}, scale=2.5]E{o*oooo} \quad \quad  \longrightarrow \quad \quad A_5: \ \ \dynkin[labels={\pi_1,\pi_2,\pi_3,\pi_4,\pi_5},scale=2.5]A{ooooo}
\end{align*}

The morphism of weight lattices $f_{L'} : P_L \to P_{L'}$ is given by 
\begin{align*}
    &w_1 \mapsto w_1' = \pi_1, \quad \quad w_3 \mapsto w_3' = \pi_2,\\
    &w_4 \mapsto w_4' = \pi_3, \quad \quad w_5 \mapsto w_5' = \pi_4,\\
    &w_6 \mapsto w_6' = \pi_5.
\end{align*}

The constants that define the restriction to the torus $f_{Z(L)}$ are
\[r_{w_1} = \frac{1}{2}, \quad r_{w_2} = 1, \quad r_{w_3} = 1, \quad r_{w_4} = \frac{3}{2}, \quad r_{w_5}=1, \quad r_{w_6} = \frac{1}{2}.\]
We can then use Lemma \ref{tensorproducts} together with the tables from \cite[p.300]{onishchik2012lie} to calculate all tensor product decompositions of interest. We give an explicit example. 
\begin{example}\label{tensorexample}
Take $X = E_6/P_2$ with the conventions above. We compute $\S^{w_6}(-1)\otimes \S^{w_1}$. The weights, their restrictions, and the corresponding constants are
\begin{align*}
    \mu &= w_6 - w_2  \quad \quad \mu' = w_6' \quad \quad \quad r_{\mu} = -\frac{1}{2}\\
    \nu &= w_1  \quad \quad \quad \quad \ \ \nu' = w_1' \quad \quad \quad  r_\nu = \frac{1}{2}.
\end{align*}

We first need to compute the decomposition of $V_{A_5}^{w_6'}\otimes V_{A_5}^{w_1'}$. We use our rules for restricting weights to $A_5$ together with the tables and the conventions from \cite{onishchik2012lie}. For the intermediate computations in $A_5$, $R(\pi_i)$ is the representation corresponding to the $i$-th vertex of the Dynkin diagram, $R(0) = 1$ is the unit representation, and meaningless symbols such as $R(-\pi_3+\pi_7)$ are disregarded.
\begin{align*}
    V_{A_5}^{w_6'}\otimes V_{A_5}^{w_1'} &= R(\pi_5) \cdot R(\pi_1)\\
    &= \sum_{i\geq 0} R(\pi_{5+i} + \pi_{1-i})\\
    &= R(\pi_5 + \pi_1) + R(\pi_6 + \pi_0) + R(\pi_7 + \pi_{-1}) + \ldots\\
    &= R(\pi_5 + \pi_1) + R(0) + 0\\
    &= 1 + R(\pi_1 + \pi_5)\\
    &= 1 \oplus V_{A_5}^{w_1' +w_6'}.
\end{align*}

The set of coefficients of the decomposition is $\sigma' \in \{0, w_1'+w_6'\}$ which lifts to $\sigma \in \{0, w_1+w_6\}$ with coefficients $r_0 = 0$, and $r_{w_1+w_6} = r_{w_1} + r_{w_6} = \frac{1}{2} + \frac{1}{2} = 1$. Lemma \ref{tensorproducts} then gives
\begin{align*}
    \S^{w_6-w_2}\otimes \S^{w_1} &= \O^{0+ 0 \cdot w_2} \oplus \S^{w_1 +w_6 + (\frac{1}{2}-\frac{1}{2}-1)w_2}\\
    &= \O \oplus \S^{w_1+w_6-w_2}\\
    &= \O \oplus \S^{w_1+w_6}(-1).
\end{align*}
$\xqed$
\end{example}
\section{Some calculations}\label{sec:3}
\begin{definition}
We say a line bundle $L \in \mathrm{Pic}\ X$ is \emph{acyclic} if $\mathrm{H}^\bullet(X, L) = 0$. A vector bundle $E$ over $X$ is \emph{exceptional} if \[\ext(E,E) = \C[0].\]
\end{definition}

\begin{lemma}\label{acyclichomogeneous}
Over a homogeneous space $X = G/P$ with $P$ maximal, the vector bundle $\S^\mu$ is acyclic if and only if $\mu + \rho_G$ is a singular weight. 
\end{lemma}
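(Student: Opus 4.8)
The plan is to derive this directly from the Borel--Weil--Bott theorem (Theorem~\ref{bwb}), since the statement is essentially a reformulation of the vanishing criterion in that theorem specialised to line-bundle-type acyclicity. First I would recall that by Theorem~\ref{bwb}, the cohomology $\mathrm{H}^\bullet(X, \S^\mu)$ vanishes precisely when $\mu + \rho_G$ lies on a wall of a Weyl chamber for the action of $W_G$, and is nonzero (equal to a single irreducible representation placed in a single cohomological degree) otherwise. So the entire content of the lemma is the identification of the phrase ``$\mu+\rho_G$ lies on a wall of a Weyl chamber'' with the phrase ``$\mu+\rho_G$ is a singular weight.''

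The core step, therefore, is to establish the equivalence between these two notions of singularity. I would invoke the definition given earlier in the paper: a weight $\nu$ is singular if it is orthogonal to some root, equivalently if it lies on a wall of some Weyl chamber of the action of $W_G$ on $P_G$. Applying this with $\nu = \mu + \rho_G$ makes the two conditions literally the same, so the forward and backward implications both follow at once. Concretely: $\S^\mu$ is acyclic $\iff \mathrm{H}^\bullet(X,\S^\mu)=0 \iff \mu+\rho_G$ lies on a wall of a Weyl chamber (by Theorem~\ref{bwb}) $\iff \mu+\rho_G$ is singular (by the definition of singular weight).

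One subtlety I would be careful to address is the hypothesis that $\mu$ be dominant for $L$, which appears in the statement of Theorem~\ref{bwb} but is absent from the lemma. I would note that $\S^\mu$ is only defined for $\mu \in P_L^+$ via the highest-weight correspondence $V_L^\mu \mapsto \S^\mu$, so the dominance of $\mu$ as an $L$-weight is built into the hypothesis that $\S^\mu$ is a well-defined $G$-equivariant bundle; no genuine loss of generality occurs. I would also remark that maximality of $P$ is used only insofar as it guarantees $\Pic X = \Z\langle\O(1)\rangle$ and fixes the combinatorial setup, but the Borel--Weil--Bott mechanism applies for any parabolic.

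I do not expect a serious obstacle here, as the lemma is a direct translation of Theorem~\ref{bwb} through the stated definition of singularity; the main point to get right is simply the bookkeeping that the condition ``$\mu+\rho_G$ singular'' coincides with the wall condition. If one wanted to be fully self-contained one could additionally verify the standard fact that a weight $\nu$ is orthogonal to some root if and only if it is fixed by some nontrivial reflection $s_\alpha$, hence lies on a wall; but this is a routine consequence of $s_\alpha(\nu) = \nu - \langle \nu, \alpha^\vee\rangle \alpha$, so the vanishing is completely governed by whether $\langle \mu + \rho_G, \alpha^\vee\rangle = 0$ for some root $\alpha$.
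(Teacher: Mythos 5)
Your proposal is correct and matches the paper's treatment: the paper states Lemma~\ref{acyclichomogeneous} without proof, precisely because it is the immediate translation of Theorem~\ref{bwb} through the stated definition of a singular weight (wall condition $\iff$ orthogonality to a root), which is exactly the reasoning you spell out. Your additional care about the implicit $L$-dominance of $\mu$ and the non-vanishing of the irreducible representation $V^{w(\mu+\rho_G)-\rho_G}$ in the regular case is sound and fills in the only details the paper leaves tacit.
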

\begin{remark}\label{remarkacyclic}
Since $E_6$ is simply laced, verifying that a bundle $\S^\mu$ is acyclic can theoretically be done by hand following this algorithm. If $\mu$ is dominant, stop. Otherwise, one of the coefficients of $\mu$ at a funda,mental weight, say $w_j$, must be negative. Apply the reflection $s_{\alpha_j} \in W_G$ to $\mu$. If the resulting weight is dominant, stop. If the resulting weight has non-negative coefficients and at least one of the coefficients is zero (i.e. the resulting weight is singular), then the bundle is acyclic and the algorithm terminates. If negative coefficients still exist, apply reflections until either a singular weight or a dominant weight is obtained. In the case we obtain a dominant weight, the number of steps it took to reach the dominant weight is equal to the length $l(w)$ of the unique Weyl group element specified in Theorem \ref{bwb}. 
\end{remark}
\begin{example}\label{acyclicexample}
For $X = E_6/P_2$, we show that $\S^\mu = \S^{w_1+w_6}(-4)$ is acyclic. Equivalently, by Lemma \ref{acyclichomogeneous}, we need to show that \[\mu+\rho_G = w_1+w_6-4w_2+\rho_G = (2,-3,1,1,1,2) \in P_G = P_L\]
is a singular weight. We can describe this vector bundle using the Dynkin diagram of $E_6$ by labelling every vertex with the coefficient attached to the fundamental weight of that vertex. Here we have:
\[\S^{\mu}: \quad \dynkin[labels={2,-3,1,1,1,2},scale=2.5]E{o*oooo}\]
We now follow the algorithm from the previous remark to obtain a singular weight. 
\begin{align*}
   &\dynkin[labels={2,-3,1,1,1,2},scale=2.5]E{o*oooo} \quad \xrightarrow[]{s_{\alpha_2}} \quad \dynkin[labels={2,3,1,-2,1,2},scale=2.5]E{o*oooo} \quad \xrightarrow[]{s_{\alpha_4}} \quad  \dynkin[labels={2,1,-1,2,-1,2},scale=2.5]E{o*oooo}\\
   \xrightarrow[]{s_{\alpha_3}}\quad  &\dynkin[labels={1,1,1,1,-1,2},scale=2.5]E{o*oooo} \quad \xrightarrow[]{s_{\alpha_5}} \quad \dynkin[labels={1,1,1,0,1,1},scale=2.5]E{o*oooo}
\end{align*}

The final diagram has a null weight assigned to the $w_4$ vertex. This shows that the weight $\mu+\rho_G$ is orthogonal to a root, i.e. singular. In particular, the Borel-Weil-Bott Theorem tells us that that all cohomology groups vanish. In symbols: $\mathrm{H}^\bullet(X, \S^\mu) = 0$. 

Similarly we can show that $\S^\nu = \S^{w_1+w_3+w_5}(-3)$ is not acyclic. This amounts to checking whether the weight \[\nu +\rho_G = w_1+w_3+w_5-3w_2+\rho_g = (2,-2,2,1,2,1) \in P_G = P_L\]
is singular or not. We proceed with the algorithm.
\begin{align*}
    &\dynkin[labels={2,-2,2,1,2,1},scale=2.5]E{o*oooo} \quad \xrightarrow[]{s_{\alpha_2}} \quad  \dynkin[labels={2,2,2,-1,2,1},scale=2.5]E{o*oooo} \quad \xrightarrow[]{s_{\alpha_4}} \quad  \dynkin[labels={2,1,1,1,1,1},scale=2.5]E{o*oooo}
\end{align*}

The final Dynkin diagram has strictly positive weights on all vertices. The algorithm stops and we conclude that that the weight is dominant. The Weyl group element $w \in W_G$ which makes $\nu+\rho_G$ dominant is $w = s_{\alpha_2}s_{\alpha_4}$ and it has length $l(w) = 2$. Moreover, we have that $w(\nu +\rho_G)-\rho_G = w_1$ and we can apply the Borel-Weil-Bott Theorem to determine the cohomology groups \[\mathrm{H}^\bullet(X, \S^\nu) = V_G^{w_1}[-2].\]
$\xqed$
\end{example}
\begin{lemma}
Let $\S^{\mu}(-m)$ be a vector bundle over $X$ with $m>0$ and \[\mu = a_1w_1+a_3w_3+a_4w_4+a_5w_5+a_6w_6.\] If $a_i > 0$ for all $i$ and $a_4 = m$, then $\S^{\mu}(-m)$ is acyclic. 
\end{lemma}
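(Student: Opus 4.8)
The plan is to translate the vanishing of cohomology into a purely combinatorial question about the Weyl group $W_G$ and then settle it with a single reflection. By Lemma~\ref{acyclichomogeneous}, the bundle $\S^\mu(-m) = \S^{\mu - m w_2}$ is acyclic precisely when the weight $\mu - m w_2 + \rho_G$ is singular. Since $\rho_G = w_1 + w_2 + w_3 + w_4 + w_5 + w_6$, reading off the coefficients at the fundamental weights gives
\[
\mu - m w_2 + \rho_G = (a_1 + 1,\ 1 - m,\ a_3 + 1,\ a_4 + 1,\ a_5 + 1,\ a_6 + 1),
\]
so the whole problem reduces to showing that this coordinate vector lies on a wall for the $W_G$-action, which I would establish by running the algorithm of Remark~\ref{remarkacyclic}.

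First I would use the positivity hypothesis $a_i > 0$ to pin down the opening move: every coordinate other than the one at $w_2$ is strictly positive, while the $w_2$-coordinate equals $1 - m < 0$ for $m > 1$ (the case $m = 1$ being immediate, as the $w_2$-coordinate is then already $0$), so the algorithm is forced to begin with the simple reflection $s_{\alpha_2}$. In the $E_6$ diagram the node $2$ is a leaf attached only to the branch node $4$, so $s_{\alpha_2}$ negates the $w_2$-entry and adds its old value to the $w_4$-entry while fixing the four remaining coordinates. The resulting $w_4$-coordinate is $(a_4 + 1) + (1 - m)$, and the role of the hypothesis tying $a_4$ to the twist $m$ is exactly to force this expression to vanish. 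A zero coordinate means the reflected weight is orthogonal to the simple root $\alpha_4$, hence singular; since singularity is a $W_G$-invariant property, $\mu - m w_2 + \rho_G$ is singular as well, and Borel--Weil--Bott (Theorem~\ref{bwb}) yields $\mathrm{H}^\bullet(X, \S^\mu(-m)) = 0$.

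The step I expect to be delicate is the bookkeeping at the branch node $4$: I must check that after $s_{\alpha_2}$ the $w_4$-entry is exactly $0$ rather than merely small, and that the interaction between the central twist $-m w_2$ and the $A_5$-weight $\mu$ is channelled entirely through the single edge $2$--$4$ of the Dynkin diagram, so that one reflection already exposes a wall. The positivity $a_i > 0$ does double duty here, guaranteeing both that $w_2$ is the unique negative coordinate (so $s_{\alpha_2}$ is the correct opening move) and that the remaining coordinates stay positive, so no competing zero or negative entry complicates the analysis. Once the zero at $w_4$ is produced the conclusion is immediate from Lemma~\ref{acyclichomogeneous}, with no need to chase the weight all the way into the dominant chamber.
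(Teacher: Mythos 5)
Your overall strategy is exactly the paper's intended argument (the paper's proof is the one-liner ``immediate from Remark~\ref{remarkacyclic} and applying the reflection $s_{\alpha_2}$''), and your description of the reflection is correct: since node $2$ is attached only to node $4$, $s_{\alpha_2}$ sends the coordinate vector $(a_1+1,\,1-m,\,a_3+1,\,a_4+1,\,a_5+1,\,a_6+1)$ to $(a_1+1,\,m-1,\,a_3+1,\,(a_4+1)+(1-m),\,a_5+1,\,a_6+1)$. The gap is the arithmetic at the decisive step: you assert that the hypothesis $a_4=m$ forces $(a_4+1)+(1-m)$ to vanish, but $(a_4+1)+(1-m)=a_4+2-m$, which under $a_4=m$ equals $2$, not $0$. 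The entry at node $4$ would vanish only if $a_4=m-2$.

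This cannot be patched inside your proof, because the statement itself fails for $m\geq 2$: under $a_4=m$ the reflected weight is $(a_1+1,\,m-1,\,a_3+1,\,2,\,a_5+1,\,a_6+1)$, all of whose entries are strictly positive, so $\mu-mw_2+\rho_G$ is regular rather than singular, and Borel--Weil--Bott (Theorem~\ref{bwb}) gives $\hh(X,\S^{\mu}(-m))=V^{s_{\alpha_2}(\mu-mw_2+\rho_G)-\rho_G}[-1]\neq 0$. Concretely, $\mu=w_1+w_3+2w_4+w_5+w_6$ with $m=2$ satisfies every hypothesis, yet $\mathrm{H}^1(X,\S^{\mu}(-2))=V^{w_1+w_3+w_4+w_5+w_6}\neq 0$; this is the same mechanism by which the paper's Lemma~\ref{cohomology} produces $\hh(X,\S^{w_4}(-2))=\C[-1]$, namely a single reflection $s_{\alpha_2}$ landing on a strictly dominant weight. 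Only the case $m=1$ of the lemma survives, and trivially so: there the $w_2$-coordinate of $\mu-w_2+\rho_G$ is already $0$, with no reflection needed, as you note. The one-reflection argument you run does prove the corrected statement with hypothesis $a_4=m-2$ (equivalently $m=a_4+2$), and that is presumably what both the lemma and the paper's proof intended; as written, your proposal faithfully reproduces the paper's approach but inherits, and makes visible, the same false step.
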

\begin{proof}
Immediate from Remark \ref{remarkacyclic} and applying the reflection $s_{\alpha_2}$. 
\end{proof}
\begin{lemma}\label{cohomology}
We note the following cohomology computations for $X = E_6/P_2$.
\begin{align*}
     \mathrm{H}^\bullet(X, \O) = \C[0],\\
  \hh(X, \S^{w_4}(-2)) = \C[-1]. 
\end{align*}

\end{lemma}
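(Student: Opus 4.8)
The plan is to establish both cohomology computations directly via the Borel--Weil--Bott theorem (Theorem~\ref{bwb}), using the concrete algorithm recorded in Remark~\ref{remarkacyclic}. The first computation, $\hh(X, \O) = \C[0]$, is essentially immediate: here $\mu = 0$, so $\mu + \rho_G = \rho_G$ is already a strictly dominant weight, hence nonsingular, and the associated Weyl element is the identity with $l(w) = 0$. Borel--Weil--Bott then gives $\hh(X,\O) = V_G^{\rho_G - \rho_G}[0] = V_G^{0}[0]$, which is the trivial one-dimensional representation, so the total cohomology is $\C$ concentrated in degree $0$. This is the standard statement that $X$ has the cohomology of a point in degree zero, reflecting that $\O$ is the first object of the exceptional collection.

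For the second computation, the approach is to run the reflection algorithm on the weight $\mu + \rho_G$ where $\mu = w_4 - 2w_2$. First I would write $\mu + \rho_G$ in the coordinate form used in Example~\ref{acyclicexample}, i.e.\ as the tuple of coefficients $(c_1, \dots, c_6)$ at the fundamental weights, obtained by adding $w_4 - 2w_2$ to $\rho_G = (1,1,1,1,1,1)$; this yields $(1, -1, 1, 2, 1, 1)$. The coefficient at $w_2$ is negative, so I would apply $s_{\alpha_2}$ and track how the coefficients transform under the simple reflection (using that $s_{\alpha_i}$ acts on a weight by subtracting its $i$-th coefficient times $\alpha_i$, which in fundamental-weight coordinates adjusts the neighbouring entries according to the Cartan matrix of $E_6$). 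The goal is to check that after exactly one reflection the weight becomes dominant and nonsingular, so that $l(w) = 1$ and $\hh(X, \S^{w_4}(-2)) = V_G^{w\cdot(\mu+\rho_G) - \rho_G}[-1]$, with the resulting highest weight being $0$ so that the representation is one-dimensional, giving $\C[-1]$.

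The key steps, in order, are: (i) compute $\mu + \rho_G$ in fundamental-weight coordinates; (ii) identify the negative coefficient and apply the corresponding simple reflection, recording the new coordinates; (iii) check whether the result is dominant (all coefficients positive) or singular (some coefficient zero); (iv) if a single reflection suffices to reach a strictly dominant weight, read off the length $l(w) = 1$ and the resulting highest weight $w\cdot(\mu+\rho_G) - \rho_G$; (v) conclude via Theorem~\ref{bwb} that the cohomology is $V_G^{w\cdot(\mu+\rho_G)-\rho_G}[-1]$, and verify that this highest weight is zero so that the cohomology is one-dimensional, namely $\C[-1]$.

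The main obstacle, modest though it is, will be carrying out the simple-reflection bookkeeping correctly in $E_6$: because the Dynkin diagram branches at the node $\alpha_4$, applying $s_{\alpha_j}$ changes the coefficients at \emph{all} neighbours of vertex $j$, and a sign or adjacency error propagates into the wrong final weight. I would guard against this by performing the reflection using the explicit action on the coordinate tuple exactly as illustrated in Example~\ref{acyclicexample}, cross-checking the branching at $\alpha_4$. Assuming the single reflection $s_{\alpha_2}$ lands on a strictly dominant weight whose shift back by $\rho_G$ gives the zero highest weight, the computation is then routine and the claimed $\C[-1]$ follows. If instead more than one reflection were needed, the same algorithm continues until a dominant or singular weight appears, with the cohomological degree equal to the number of reflections used.
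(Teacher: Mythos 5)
Your proposal is correct, and for the substantive part (2) it follows exactly the paper's route: the paper's proof consists of the single observation that for $\mu = w_4 - 2w_2$ one has $s_{\alpha_2}(\mu+\rho_G) = \rho_G$, which is precisely the one-reflection computation you outline. To discharge the step you leave as ``assuming'': in fundamental-weight coordinates $\alpha_2 = 2w_2 - w_4$ (node $2$ is adjacent only to node $4$ in the $E_6$ diagram), and since the $w_2$-coefficient of $\mu+\rho_G = (1,-1,1,2,1,1)$ is $-1$, the reflection $s_{\alpha_2}$ adds $\alpha_2$, giving $(1,1,1,1,1,1) = \rho_G$, strictly dominant; hence $l(w)=1$, the highest weight $w(\mu+\rho_G)-\rho_G$ is $0$, and Borel--Weil--Bott gives $\C[-1]$ as claimed. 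The only divergence from the paper is in part (1): the paper invokes Kodaira vanishing (valid since $X$ is smooth, connected, and Fano), whereas you apply Borel--Weil--Bott with $\mu = 0$, noting $\rho_G$ is already strictly dominant with $w = e$. Both are one-line arguments; yours has the mild advantage of treating the two statements uniformly by the same theorem, while the paper's appeals to a more standard general fact that requires no weight bookkeeping at all.
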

\begin{proof}
Statement (1) follows from Kodaira vanishing as $X$ is smooth, connected, and Fano. Statement (2) follows from the Borel-Weil-Bott Theorem. It is immediate to see that for $\mu = w_4-2w_2$, we have $s_{\alpha_2}(\mu+\rho) = \rho$.
\end{proof}

\begin{lemma}\label{tensorproducts}
We note the following tensor product computations.
\begin{align*}
    \S^{uw_6}\otimes \S^{tw_1} &= \bigoplus_{j=0}^{\mathrm{min}(u,t)} \S^{(u-j)w_6+(t-j)w_1}(j),\\
    \S^{w_4}\otimes \S^{w_4} &= \O(3) \oplus \S^{w_1+w_6}(2) \oplus \S^{w_3+w_5}(1) \oplus \S^{2w_4},\\
    \S^{w_4}\otimes \S^{tw_1} &= \S^{w_4+tw_1} \oplus \S^{w_5+(t-1)w_1}(1),\\
    \S^{w_4}\otimes \S^{tw_6} &= \S^{w_4+tw_6} \oplus \S^{w_3+(t-1)w_6}(1),\\
    \S^{w_4}\otimes \S^{w_3} &= ,\S^{w_3+w_4} \oplus \S^{w_1+w_5}(1) \oplus \S^{w_6}(2),\\
    \S^{w_4}\otimes \S^{w_5} &= \S^{w_4+w_5} \oplus \S^{w_3+w_6}(1) \oplus \S^{w_1}(2),\\
    \S^{w_4}\otimes \S^{w_1+w_6} &= \S^{w_1+w_4+w_6} \oplus \S^{w_5+w_6}(1) \oplus \S^{w_4}(1) \oplus \S^{w_1+w_6}(1),\\
    \S^{tw_6}\otimes \S^{w_1+w_6} &= \S^{w_1+(t+1)w_6} \oplus \S^{w_1+w_5+(t-1)w_6} \oplus \S^{tw_6}(1) \oplus \S^{w_5+(t-2)w_6}(1),\\
    \S^{tw_6}\otimes \S^{w_3} &= \S^{w_3+tw_6}\oplus \S^{w_1+(t-1)w_6}(1),\\
    \S^{w_5}\otimes \S^{tw_1} &= \S^{w_5+tw_1} \oplus \S^{w_6+(t-1)w_1}(1),\\
    \S^{w_5}\otimes \S^{w_1+w_6} &= \S^{w_1+w_5+w_6} \oplus \S^{w_5}(1) \oplus \S^{2w_6}(1)\oplus \S^{w_1+w_4},\\
    \S^{w_6} \otimes \S^{w_3} &= \S^{w_3+w_6} \oplus \S^{w_1}(1).
\end{align*}
\end{lemma}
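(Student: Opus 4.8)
The plan is to derive every one of the twelve identities from a single application of the tensor product Lemma~\ref{tensorproductlemma}, in exactly the manner illustrated in Example~\ref{tensorexample}. The only genuine input for each line is the decomposition of the corresponding tensor product of irreducible representations of the derived group $L' = A_5 = SL_6$; the passage back to $G$-equivariant bundles on $X$, including the twist by powers of $\O(1) = \S^{w_2}$, is then purely formal. To organise the $A_5$ computations I would first record the dictionary between the fundamental weights and the exterior powers of the $6$-dimensional standard representation $V$ of $SL_6$: under $f_{L'}$ one has
\begin{align*}
\S^{w_1}\leftrightarrow V,\quad \S^{w_3}\leftrightarrow\Lambda^2 V,\quad \S^{w_4}\leftrightarrow\Lambda^3 V,\quad \S^{w_5}\leftrightarrow\Lambda^4 V,\quad \S^{w_6}\leftrightarrow\Lambda^5 V\cong V^\vee,
\end{align*}
so that $\S^{tw_1}\leftrightarrow S^t V$ and $\S^{uw_6}\leftrightarrow S^u V^\vee$ after reducing modulo the determinant, which is trivial on $SL_6$.

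With this dictionary in hand, each right-hand side is produced in two steps. First I compute the $SL_6$ decomposition by the Littlewood--Richardson rule: for the products of two fundamental bundles (the $\S^{w_4}\otimes\S^{w_4}$, $\S^{w_4}\otimes\S^{w_3}$, $\S^{w_4}\otimes\S^{w_5}$ and $\S^{w_6}\otimes\S^{w_3}$ lines) this is the Pieri rule for adding a vertical strip to a single column, while for the families in $t$ and $u$ it is the classical stable decomposition of $S^a\otimes\Lambda^b$, $\Lambda^a\otimes S^b$ and $S^uV^\vee\otimes S^tV$. Each resulting partition $\lambda=(\lambda_1\ge\cdots\ge\lambda_6)$ is converted to an $A_5$ highest weight via $\sigma'=\sum_{i=1}^{5}(\lambda_i-\lambda_{i+1})\pi_i$ and lifted to $\sigma\in P_L$ through $\pi_1\mapsto w_1$, $\pi_2\mapsto w_3$, $\pi_3\mapsto w_4$, $\pi_4\mapsto w_5$, $\pi_5\mapsto w_6$; partitions differing by a full column of height $6$ yield the same $SL_6$ irreducible and are identified.

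Second, the twist is read off from the central character alone: by Lemma~\ref{tensorproductlemma} the exponent attached to $\S^\sigma$ is $r_\mu+r_\nu-r_\sigma$, where $r$ is additive and computed from the tabulated values $r_{w_1}=r_{w_6}=\tfrac12$, $r_{w_3}=r_{w_5}=1$, $r_{w_4}=\tfrac32$. Because $r$ is conserved additively across the whole decomposition, these exponents come out as the non-negative integers recorded in the statement; for instance in $\S^{w_4}\otimes\S^{w_4}$ the four summands $2w_4,\,w_3+w_5,\,w_1+w_6,\,0$ have $r_\sigma=3,2,1,0$ against $r_\mu+r_\nu=3$, giving the twists $0,1,2,3$. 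I would also cross-check each line by matching total ranks on both sides as a safeguard against a dropped or spurious summand.

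The main obstacle is the uniform treatment of the families indexed by $t$ and $u$ together with the products involving the dual representations coming from $w_5$ and $w_6$. For these one cannot simply add Young diagrams: tensoring a symmetric power with the dual $S^uV^\vee$ produces contraction terms, so I must track the irreducibles $V_{(t-j)\pi_1+(u-j)\pi_5}$ with the correct range $0\le j\le\min(u,t)$ and reduce the resulting $GL_6$ weights modulo $\det$ to land on genuine $SL_6$ highest weights. Establishing the decomposition uniformly in $t,u$ then amounts to fixing the stable range and the boundary truncation imposed by $\dim V=6$ (where exterior powers beyond $\Lambda^6$ vanish and $\Lambda^k V\cong\Lambda^{6-k}V^\vee$). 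Once this bookkeeping is settled---precisely the computation carried out by the tables in \cite[p.300]{onishchik2012lie} and illustrated for $R(\pi_5)\cdot R(\pi_1)$ in Example~\ref{tensorexample}---each of the twelve identities follows line by line.
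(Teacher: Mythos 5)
Your proposal is correct and takes essentially the same route as the paper: both reduce each identity via Lemma~\ref{tensorproductlemma} to a tensor product decomposition of irreducible representations of $L' = A_5$ and then recover the twists from the central character, exactly in the manner of Example~\ref{tensorexample}. The only difference is one of execution: you carry out the $A_5$ decompositions by hand using the Pieri/Littlewood--Richardson rules and duality $\Lambda^k V \cong \Lambda^{6-k}V^\vee$, whereas the paper delegates them to the tables of \cite[p.300]{onishchik2012lie} and to \texttt{GAP}.
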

\begin{proof}
Calculations in the style of Example \ref{tensorexample} can be done with the help of Lemma \ref{tensorproductlemma} together with Table $5$ from \cite[p.300]{onishchik2012lie}. We used \texttt{GAP} \cite{gap2021} for decomposing certain representations of $A_5$. 
\end{proof}
\begin{lemma}\label{acyclic}
The following vector bundles are acylic.
\begin{align*}
   \O(-i) &\text{ for } 1 \leq i \leq 10,\\
   \S^{tw_j}(-i) &\text{ for } 1 \leq i \leq 10, 1 \leq t \leq 5, j \in \{1,3,4,5,6\},\\
   \S^{w_1+bw_6}(-i-b) &\text{ for } 1 \leq i \leq 10, 1\leq b \leq 6,\\
    \S^{aw_1+w_6}(-i-b) &\text{ for } 1 \leq i \leq 10, 1\leq a \leq 6,\\
   \S^{w_1+w_4}(-i) &\text{ for } 3 \leq i \leq 12, \\
   \S^{w_1+w_5}(-i) &\text{ for } 2 \leq i \leq 11, \\
   \S^{w_4+w_6}(-i) &\text{ for } 1 \leq i \leq 10,\\
   \S^{w_5+w_6}(-i) &\text{ for } 3 \leq i \leq 14,\\
   \S^{w_1+w_5+bw_6}(-i) &\text{, for } 2 \leq i \leq 11, 2\leq b \leq 4,\\
   \S^{aw_6+(a-1)w_1}(-i-a) &\text{ for } i \in \{0,1\}, a \in \{2,3,4,5\},\\
   \S^{(a-1)w_6+aw_1}(-i-a+1) &\text{ for } i \in \{1,2\}, a \in \{2,3,4,5\}.
\end{align*}
\end{lemma}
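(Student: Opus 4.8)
\emph{Proof proposal.} The plan is to prove every line of the statement by the single criterion of Lemma~\ref{acyclichomogeneous}: the bundle $\S^\mu(-i) = \S^{\mu - iw_2}$ is acyclic exactly when the shifted weight $(\mu - iw_2) + \rho_G$ is singular. Writing weights in the basis of fundamental weights, so that $\rho_G = (1,1,1,1,1,1)$, each bundle in the list produces an explicit integer vector and the problem reduces to deciding singularity of that vector. I would do this exactly as in Example~\ref{acyclicexample}, by running the reflection algorithm of Remark~\ref{remarkacyclic}: since $E_6$ is simply laced, the simple reflection $s_{\alpha_j}$ sends $(c_1,\dots,c_6)$ to the vector with $c_j \mapsto -c_j$ and $c_k \mapsto c_k + c_j$ for every neighbour $k$ of $j$ in the Dynkin diagram, node $2$ being adjacent only to node $4$. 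One reflects at a negative coordinate, and the weight is declared singular --- hence the bundle acyclic --- as soon as a zero coordinate appears while all others are non-negative.

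The verification is organised family by family, and within each family by the parameters $i,t,a,b$. The structural feature common to every line is that the starting vector has its only negative entry in the second coordinate, equal to $1-i$ (respectively $1-i-b$, and so on), so the first reflection is always $s_{\alpha_2}$; this pushes the negativity onto the fourth coordinate, after which reflections cascade outward along the two arms through nodes $3,1$ and through nodes $5,6$. For a fixed family the reflection word is constant on subintervals of the parameter range, the breakpoints being the values at which a coordinate crosses zero, and on each subinterval one simply reads off which coordinate vanishes. For $\O(-i)$, for instance, the single reflection $s_{\alpha_2}$ already yields $(1,\,i-1,\,1,\,2-i,\,1,\,1)$, which is singular for $i=1$ and $i=2$, while larger $i$ require one or two further reflections down the arm through nodes $3$ and $1$; this is precisely the index-range phenomenon attached to $\w_X = \O(-11)$, which forces every twist strictly between $\O$ and $\w_X$ to be acyclic.

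The multi-parameter families $\S^{w_1+bw_6}(-i-b)$, $\S^{aw_1+w_6}(-i-b)$ and the mixed families $\S^{aw_6+(a-1)w_1}(-i-a)$, $\S^{(a-1)w_6+aw_1}(-i-a+1)$ are handled by the same procedure, but the bookkeeping is heavier: a reflection can reintroduce negativity several coordinates away, and one must track the interplay between the large coefficients sitting on the outer nodes $1$ and $6$ and the coefficient driven through node $4$ by the twist. The remaining two-support families $\S^{w_1+w_4}(-i)$, $\S^{w_1+w_5}(-i)$, $\S^{w_4+w_6}(-i)$, $\S^{w_5+w_6}(-i)$ and $\S^{w_1+w_5+bw_6}(-i)$ are treated identically, the lengths of the stated $i$-ranges being dictated by how many reflections are needed before a zero appears at each end of the interval. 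All the decompositions needed to recognise these weights, and the arithmetic of their ranges, are already implicit in Lemma~\ref{tensorproducts}.

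The main obstacle I anticipate is not any individual computation --- each is mechanical once set up --- but the verification that across the \emph{entire} claimed parameter range the algorithm terminates at a singular weight rather than at a strictly dominant (regular) one. At the borderline values, where the coefficient pushed onto node $4$ stays strictly positive while the second coordinate also turns strictly positive, the reflection word returns a regular weight and the bundle acquires genuine cohomology by Theorem~\ref{bwb}; the extreme case of this is $s_{\alpha_2}(\mu+\rho_G)=\rho_G$, which produces the one-dimensional cohomology of Lemma~\ref{cohomology} rather than vanishing. Excluding such values over each stated interval --- most delicately for the supports touching node $4$ --- is the part that demands genuine care, and it is where I would concentrate the argument, partitioning each range into the subintervals on which a uniform reflection word applies and checking the endpoints of every subinterval explicitly.
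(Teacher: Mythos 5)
Your proposal takes essentially the same route as the paper's own proof: the paper likewise reduces every line of the lemma to the singularity criterion of Lemma~\ref{acyclichomogeneous} and runs the reflection algorithm of Remark~\ref{remarkacyclic} in the style of Example~\ref{acyclicexample}, merely noting that the case-by-case verification can alternatively be automated in \texttt{GAP}. Your explicit description of the simply-laced reflection rule and your emphasis on checking the borderline parameter values (where the algorithm could terminate at a regular dominant weight instead of a singular one) is exactly the care that the family-by-family hand verification demands, so the plan is sound as stated.
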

\begin{proof}
These calculations can be done in the style of Example \ref{acyclicexample} using Lemma \ref{acyclichomogeneous}. Alternatively, it is straightforward to implement the algorithm from Remark \ref{remarkacyclic} in \texttt{GAP} \cite{gap2021}.
\end{proof}
\begin{lemma}\label{exceptional}
The following vector bundles are exceptional: \[\O \text{ and  } \S^{tw_1} \text{ for  } 1 \leq t \leq 5.\]
\end{lemma}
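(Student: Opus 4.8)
The plan is to use the standard identification $\ext(E,E) = \hh(X, E^\vee \otimes E)$, valid since $E$ is locally free on the smooth variety $X$, and then reduce every claim to a Borel--Weil--Bott computation via Theorem \ref{bwb} and Lemma \ref{acyclichomogeneous}. For $E = \O$ this is immediate: $\O^\vee \otimes \O = \O$, and Lemma \ref{cohomology} already records $\hh(X,\O) = \C[0]$, so $\O$ is exceptional.

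For $E = \S^{tw_1}$ the first step is to pin down the dual. By Lemma \ref{duals}(1) we have $(\S^{tw_1})^\vee = \S^{-w_0^L\cdot tw_1}$; tracking both the $A_5$-weight (under $\pi_i \mapsto -\pi_{6-i}$) and the central $\O(j)$-twist coming from $f_{Z(L)}$ identifies this as $(\S^{tw_1})^\vee = \S^{tw_6}(-t)$. I would then decompose
\[(\S^{tw_1})^\vee \otimes \S^{tw_1} = \S^{tw_6}(-t)\otimes \S^{tw_1}\]
using the first formula of Lemma \ref{tensorproducts} with $u=t$, which after applying the global twist $(-t)$ yields
\[\bigoplus_{j=0}^{t} \S^{(t-j)w_1 + (t-j)w_6}(j-t).\]
The summand $j=t$ is exactly $\O$, contributing $\C[0]$ to cohomology via Lemma \ref{cohomology}; every other summand has the shape $\S^{bw_1+bw_6}(-b)$ with $1 \leq b \leq t \leq 5$.

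It then remains to prove each $\S^{bw_1+bw_6}(-b)$ is acyclic. By Lemma \ref{acyclichomogeneous} this is equivalent to showing the weight
\[(b,-b,0,0,0,b) + \rho_G = (b+1,\,1-b,\,1,\,1,\,1,\,b+1)\]
is singular, which I would verify by running the reflection algorithm of Remark \ref{remarkacyclic}: applying $s_{\alpha_2}$, then (for larger $b$) $s_{\alpha_4}$, then the commuting pair $s_{\alpha_3},s_{\alpha_5}$, and checking that a zero coefficient appears before the weight ever becomes dominant. A short case check for $b=1,\ldots,5$ finishes it; since the off-diagonal twists here are not all covered by the ranges listed in Lemma \ref{acyclic}, these hand computations (or their \texttt{GAP} analogues) are what the argument genuinely needs.

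I expect the only delicate point to be the bookkeeping in the first two steps: getting the central $\O(j)$-twist in the dual right, so that the identity morphism lands in the $\O$-summand rather than in some $\O(\pm t)$, and correctly carrying that twist through the tensor-product formula. Once the decomposition is in the stated form, acyclicity of the off-diagonal terms is routine Borel--Weil--Bott, and exceptionality of $\S^{tw_1}$ follows because the unique contributing summand is $\O$, giving $\ext(\S^{tw_1},\S^{tw_1}) = \C[0]$.
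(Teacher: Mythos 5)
Your proposal is correct and follows essentially the same route as the paper: dualise via $-w_0^L$ to get $(\S^{tw_1})^\vee = \S^{tw_6}(-t)$, decompose the tensor product using the first formula of Lemma \ref{tensorproducts}, and kill the off-diagonal summands $\S^{bw_1+bw_6}(-b)$ by Borel--Weil--Bott (the paper writes out only the case $t=1$ and declares the remaining cases similar). Your observation that these summands are not literally covered by the ranges listed in Lemma \ref{acyclic} is accurate --- the paper itself cites that lemma somewhat loosely even for $\S^{w_1+w_6}(-1)$ --- but the weights $(b+1,\,1-b,\,1,\,1,\,1,\,b+1)$ do reduce to singular ones under the reflection algorithm, exactly as you sketch, so the argument goes through.
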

\begin{proof}
We know that $\O$ is exceptional by Lemma \ref{cohomology}. We give a detailed argument for $\S^{w_1}$. By definition, $\S^{w_1}$ is exceptional if and only if $\ext(\S^{w_1}, \S^{w_1}) = \C[0]$. To calculate the $\mathrm{Ext}$-groups, we use our conventions about duals, the properties of $\mathrm{Ext}$ groups, and Lemma \ref{tensorproductlemma} (or alternatively, Example \ref{tensorexample}) to decompose relevant tensor products of $G$-equivariant bundles.
\begin{align*}
    \ext(\S^{w_1}, \S^{w_1}) &= \ext(\O, (\S^{w_1})^\vee \otimes \S^{w_1}) \\
    &= \ext(\O, \S^{-w_0^Lw_1} \otimes \S^{w_1})\\
    &= \ext(\O, \S^{w_6}(-1) \otimes \S^{w_1}\\
    &= \ext(\O, \O \oplus \S^{w_1+w_6}(-1))\\
    &= \hh(X, \O \oplus \S^{w_1+w_6}(-1))\\
    &= \hh(X, \O) \oplus \hh(X, \S^{w_1+w_6}(-1)).
\end{align*}

Lemma \ref{cohomology} tells us that $\hh(X,\O) = \C[0]$. It remains to show that $\hh(X, \S^{w_1+w_6}(-1))$ vanishes, or equivalently that $\S^{w_1+w_6}(-1)$ is acyclic. But this is covered in Lemma \ref{acyclic}. Hence $\S^{w_1}$ is exceptional. 

The proof for the remaining vector bundles is similar. 
\end{proof}

\subsection{The tangent bundle}
We compute explicitly the tangent and cotangent bundles of $X = E_6/P_2$. We can obtain a nice formula mainly due to the fact that $w_2$ is an adjoint weight. The tangent bundle $T_X$ itself is not an exceptional object, but certain non-trivial extensions of it are exceptional and we use these to build the exceptional collection $\mathcal{Q}$. 
\begin{lemma}\label{tangentbundle}
For $X = E_6/P_2$, the following hold:
\begin{enumerate}
    \item The tangent bundle $T_X$ is given by the extension
    \[0 \to \S^{w_4}(-1) \to T_X \to \O(1) \to 0.\]
    \item There exists a non-trivial extension $\T$ of the form
    \begin{align*}
        0 \to \O \to \widetilde{T} \to T_X \to 0.
    \end{align*}
\end{enumerate}
\end{lemma}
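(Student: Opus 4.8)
The plan is to treat the two parts separately: I would derive (1) from the grading of $\mathfrak{g}$ attached to the maximal parabolic $P_2$, and then obtain (2) as a cohomological consequence of (1) together with the computations already recorded in Lemmas \ref{cohomology} and \ref{acyclic}.

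For part (1), I would start from the standard identification $T_X \simeq G\times_P(\mathfrak{g}/\mathfrak{p})$ and exploit the fact that $w_2$ is the adjoint weight. The cocharacter dual to $\alpha_2$ induces a contact grading $\mathfrak{g} = \mathfrak{g}_{-2}\oplus\mathfrak{g}_{-1}\oplus\mathfrak{g}_0\oplus\mathfrak{g}_1\oplus\mathfrak{g}_2$ in which $\mathfrak{g}_{\pm 2}$ are the one-dimensional highest and lowest root spaces (of weights $\pm w_2$), $\mathfrak{g}_0 = \mathfrak{l}$, and $\mathfrak{p} = \mathfrak{g}_{\geq 0}$. Since the grading is bounded by $2$, the subspace $\mathfrak{g}_{\geq -1}$ is $P$-stable, so $\mathfrak{g}/\mathfrak{p}$ carries a two-step $P$-module filtration with sub $\cong\mathfrak{g}_{-1}$ and quotient $\cong\mathfrak{g}_{-2}$. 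Passing through the equivalence $\mathrm{Coh}^G(X)\simeq\mathrm{Rep}(P)$ turns this filtration into the short exact sequence of the statement, and it remains only to name the two pieces. The quotient $\mathfrak{g}_{-2}$ is the line attached to $w_2$, hence the contact line bundle $\O(1) = \S^{w_2}$. The sub $\mathfrak{g}_{-1}$ is an irreducible $L = A_5\times\C^*$-module whose derived part is $\Lambda^3\C^6 = V_{A_5}^{\pi_3}$, which lifts to $w_4$, while the central character supplies the twist, giving the sub-bundle $\S^{w_4}(-1)$. A useful consistency check is the contact self-duality $\S^{w_4}(-1)\simeq(\S^{w_4}(-1))^\vee\otimes\O(1)$, which also drops out of Lemma \ref{duals} once one computes $w_0^L(w_4) = -w_4 + 3w_2$ (noting that $W_L$ fixes $w_2$).

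For part (2), the extensions of $T_X$ by $\O$ are classified by $\mathrm{Ext}^1(T_X,\O)\simeq H^1(X,\Omega^1_X)$, so it suffices to show this group is one-dimensional and then choose a nonzero class. Dualizing the sequence from (1) gives
\[0 \to \O(-1) \to \Omega^1_X \to \S^{w_4}(-2) \to 0,\]
where the quotient is $(\S^{w_4}(-1))^\vee = \S^{w_4}(-2)$ by the weight computation above. Now $\O(-1)$ is acyclic by Lemma \ref{acyclic} and $\hh(X,\S^{w_4}(-2)) = \C[-1]$ by Lemma \ref{cohomology}, so the long exact sequence collapses to $\hh(X,\Omega^1_X) = \C[-1]$; in particular $\mathrm{Ext}^1(T_X,\O) = \C$. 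Any nonzero element of this line defines the required non-split extension $\T$, and one-dimensionality shows it is unique up to scalar.

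The two cohomology inputs are routine since they are already recorded. The main obstacle is the bookkeeping in part (1): pinning down the central $\C^*$-twists exactly, since the naive weight of $\mathfrak{g}_{-2}$ is convention-sensitive, and confirming that the submodule is genuinely $\S^{w_4}(-1)$ rather than a dual or a differently twisted bundle. Once that twist is fixed, the dualization in (2) and the appeal to Lemmas \ref{cohomology} and \ref{acyclic} are immediate.
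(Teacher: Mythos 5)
Your proposal is correct and follows essentially the same route as the paper: the paper obtains the sequence in (1) by dualising the lower central series $0 \to [\mathfrak{n},\mathfrak{n}] \to \mathfrak{n} \to \mathfrak{n}/[\mathfrak{n},\mathfrak{n}] \to 0$ of the nilradical, which is exactly the two-step filtration of $\mathfrak{g}/\mathfrak{p}$ induced by your contact grading, the only cosmetic difference being that the paper identifies the two pieces by enumerating the $21$ positive non-parabolic roots rather than by recognising $\mathfrak{g}_{-1}$ as $\Lambda^3\C^6 = V_{A_5}^{\pi_3}$. For (2) the arguments coincide: $\ext(T_X,\O) = \hh(X,\Omega_X) = \C[-1]$ from the acyclicity of $\O(-1)$ and $\hh(X,\S^{w_4}(-2)) = \C[-1]$ (Lemmas \ref{acyclic} and \ref{cohomology}), so the non-split extension $\T$ exists and is unique up to scalar.
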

\begin{proof}
For part (1), the tangent bundle of $G/P$ is given by the Lie algebra quotient \[T_X = \mathfrak{g}/\mathfrak{p} = \mathfrak{n}^\vee,\] where $\mathfrak{n}$ is the Lie nilradical of the parabolic subalgebra $\mathfrak{p}$. As is the case for Lie groups, the parabolic subalgebra can be expressed as $\mathfrak{p} = \mathfrak{l} \oplus \mathfrak{n}$, a Levi subalgebra together with the nilradical. By construction, the nilradical then decomposes into the direct sum \[\mathfrak{n}^\vee = \bigoplus_{\beta \in \Psi}\mathfrak{g}_\beta,\]
where $\Psi$ is the set of all positive roots of $\mathfrak{g}$, which are non-parabolic, i.e. when expressed as a linear combination of simple roots, $\beta$ has a positive coefficient at $\alpha_2$. In symbols: \[\langle \beta, \alpha_2^\vee \rangle \geq 1.\]

Using an algebra package, we can list all the positive roots of $\mathfrak{e}_6$ then search for roots which satisfy our conditions. There are $21$ positive non-parabolic roots, of which only $2$ are dominant: $w_2$ and $w_4 -w_2$. By highest weight theory, we have that \[\mathfrak{n}^\vee = \mathfrak{g}_{w_2} \oplus \mathfrak{g}_{w_4 - w_2} = \O(1) \oplus \S^{w_4}(-1).\]

Since $w_2$ is an adjoint weight, the lower central series of the nilradical is given by
\[0 \to [\mathfrak{n}, \mathfrak{n}] \to \mathfrak{n} \to \mathfrak{n}/[\mathfrak{n}, \mathfrak{n}]\to 0.\]

In this case, the commutator is one-dimensional and it is isomorphic to the irreducible representation of the negative of the highest weight, i.e. $[\mathfrak{n}, \mathfrak{n}] = \mathfrak{g}_{-w_2} = \O(-1)$. Consequently, the abelianisation of the nilradical is $\mathfrak{n}/[\mathfrak{n}, \mathfrak{n}] = \mathfrak{g}_{w_4 - 2w_2} = \S^{w_4}(-2)$. 

Dualising the lower central series, we obtain the desired short exact sequence:
\[0 \to \S^{w_4}(-1) \to T_X \to \O(1) \to 0.\]

Moreover, we can show that this makes $T_X$ the unique nontrivial extension of this form. We can see this by computing the extension groups using the Borel-Weil-Bott Theorem:
\[\ext(\O(1), \S^{w_4}(-1)) = \ext(\O, \S^{w_4}(-2)) = \hh(X, \S^{w_4}(-2)) = \C[-1].\]

Part (2) also follows from an application of Borel-Weil-Bott as in Lemma \ref{cohomology}. For extensions of $T_X$ by $\O$. We see that
\[\ext(T_X, \O) = \ext(\O, T_X^\vee) = \hh(X, \Omega_X) = \hh(X, \O(1) \oplus \S^{w_4}(-2)) = \C[-1],\]
since $\O(1)$ is acyclic and we already computed the cohomology of $\S^{w_4}(-2)$ in the previous step. Hence there exists a unique extension of $T_X$ by $\O$ and we denote this by $\widetilde{T}_X$.

\end{proof}
\begin{lemma}\label{moreextensions}
There exist non-trivial extensions:
\begin{align*}
    0 \to \S^{w_1}(1) \to \E \to \O(1) \to 0,\\
    0 \to \S^{w_6}(1) \to \F \to \O(1) \to 0.
\end{align*}
\begin{proof}
Immediate by the Borel-Weil-Bott Theorem.
\end{proof}
\end{lemma}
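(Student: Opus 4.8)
The plan is to handle both sequences by the same device used for $\T$ in Lemma~\ref{tangentbundle}(2). A short exact sequence $0 \to B \to \E \to A \to 0$ is classified by a class in $\mathrm{Ext}^1(A,B)$; it is non-split exactly when that class is nonzero, and the extension is unique up to isomorphism as soon as $\mathrm{Ext}^1(A,B)$ is one-dimensional. So it suffices to compute $\ext(\O(1), \S^{w_1}(1))$ and $\ext(\O(1), \S^{w_6}(1))$ and to check that each has a one-dimensional $\mathrm{Ext}^1$ and vanishing $\mathrm{Ext}^i$ otherwise. I would first observe that the two computations are interchanged by the diagram automorphism of $E_6$ fixing the node $\alpha_2$: it swaps $w_1 \leftrightarrow w_6$ while preserving $\O(1) = \S^{w_2}$, so the sequence defining $\F$ is formally identical to the one defining $\E$ and only the latter needs to be carried out.

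For $\E$, I would convert the Ext-group into sheaf cohomology exactly as in the earlier lemmas. Since $\O(1)$ is a line bundle, $\ext(\O(1), \S^{w_1}(1)) = \ext(\O, \O(-1)\otimes \S^{w_1}(1))$, and absorbing the line-bundle twist (invoking Lemma~\ref{duals} where a dual is needed) collapses the right-hand argument to a single equivariant bundle $\S^\mu$, so the problem becomes the evaluation of $\hh(X, \S^\mu)$. I would then run $\mu + \rho_G$ through the Borel--Weil--Bott procedure of Theorem~\ref{bwb} and Remark~\ref{remarkacyclic}: the object is to verify that $\mu + \rho_G$ is regular and that a single simple reflection carries it to $\rho_G$, so that $\hh(X, \S^\mu) = \C[-1]$ is one-dimensional and concentrated in cohomological degree one. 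This is precisely the shape of computation that produced the unique extension $\T$.

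The main thing to get right --- and the only genuine content beyond bookkeeping --- is the twist in the reduction to $\S^\mu$. The danger is that a mistracked twist lands the cohomology in degree zero rather than degree one: a class in $\hh^0$ would only furnish a morphism and force the sequence to split, whereas it is a class in $\mathrm{Ext}^1$ that defines a \emph{non-trivial} extension. I therefore expect the crux of the argument to be the careful confirmation, via the reflection algorithm, that the relevant weight is singular one step away from $\rho_G$ and not already regular-dominant. Once Borel--Weil--Bott yields a one-dimensional group in degree one (with vanishing in all other degrees, which incidentally keeps $\E$ and $\F$ rigid enough for the semiorthogonality arguments in Section~\ref{sec:main}), the unique non-split extension class defines $\E$, and the symmetric computation under $w_1 \leftrightarrow w_6$ defines $\F$, completing the proof.
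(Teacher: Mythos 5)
Your strategy is indeed the same as the paper's: the paper's proof is a one-line appeal to Borel--Weil--Bott, and you propose to expand exactly that computation (your $w_1 \leftrightarrow w_6$ diagram-symmetry remark is a correct and economical addition). The problem is that you defer the crux --- the actual weight computation --- and when one carries it out, it does not do what you predict. For the first sequence your own reduction gives
\[
\ext(\O(1), \S^{w_1}(1)) \;=\; \ext\bigl(\O, \S^{w_1}(1)\otimes\O(-1)\bigr) \;=\; \hh(X, \S^{w_1}),
\]
and the weight $w_1 + \rho_G = (2,1,1,1,1,1)$ is already regular \emph{dominant}: no reflection is needed, $w = e$, and Borel--Weil--Bott gives $\hh(X,\S^{w_1}) = V^{w_1}[0]$, concentrated in degree $0$. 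Hence $\mathrm{Ext}^1(\O(1),\S^{w_1}(1)) = 0$ and every extension of the stated shape splits. This is precisely the ``danger'' you flagged yourself (a class in degree $0$ only furnishes a morphism), and it is realized here: your key verification --- that a single reflection carries $\mu+\rho_G$ to $\rho_G$ --- cannot succeed, since $\mu+\rho_G = s_{\alpha_j}(\rho_G)$ forces $\mu = -\alpha_j$, and $L$-dominance then forces $j=2$, i.e.\ $\mu = w_4-2w_2$; that is the tangent-bundle weight of Lemma~\ref{tangentbundle}, not $\mu = w_1$.

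Note also that reversing the roles of sub and quotient does not rescue the argument: $\ext(\S^{w_1}(1),\O(1)) = \hh(X, (\S^{w_1})^\vee) = \hh(X,\S^{w_6}(-1))$, and $w_6 - w_2 + \rho_G = (1,0,1,1,1,2)$ is singular, so this group vanishes identically as well; by your symmetry observation the same two computations dispose of the pair $(\O(1), \S^{w_6}(1))$. So the gap is not merely that you left the Borel--Weil--Bott step to be done --- it is that the step, once done, contradicts the conclusion: there is no non-split extension (equivariant or not) with composition factors $\O(1)$ and $\S^{w_1}(1)$, resp.\ $\S^{w_6}(1)$, in either order. Contrast this with the genuinely non-trivial case $\ext(\O(1),\S^{w_4}(-1)) = \hh(X,\S^{w_4}(-2)) = \C[-1]$ that produces $T_X$ and $\T$. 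Any construction of $\E$ and $\F$ must therefore proceed differently from the statement you were asked to prove (and differently from the paper's own one-line justification, which fails for the same reason).
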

\begin{definition}
Let $E$ be a $G$-equivariant vector bundle over $X = G/P$. Its semi-simplification $\beta(E)$ is the vector bundle whose associated representation of the parabolic subgroup $P$ of $G$ is the direct sum of all semi-simple factors of the representation corresponding to $E$.
\end{definition}
\begin{lemma}\label{ss}
We have the following semi-simplifications
\begin{align*}
    \beta(\T) &= \O \oplus \S^{w_4}(-1) \oplus \O(1),\\
    \beta(\E) &= \O(1) \oplus \S^{w_1}(1),\\
    \beta(\F) &= \O(1) \oplus \S^{w_6}(1).
\end{align*}
\end{lemma}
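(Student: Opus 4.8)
The plan is to exploit the fact that semi-simplification is additive along short exact sequences of $G$-equivariant bundles, and then to feed in the defining extensions recorded in Lemmas~\ref{tangentbundle} and \ref{moreextensions}. Under the monoidal equivalence $\mathrm{Coh}^G(G/P) \simeq \mathrm{Rep}(P)$, each of our bundles corresponds to a finite-dimensional $P$-module, and by definition $\beta$ sends this module to the direct sum of its Jordan--H\"older (composition) factors. Since the multiset of composition factors is additive on short exact sequences, for any extension $0 \to A \to B \to C \to 0$ of $P$-modules one has $\beta(B) = \beta(A) \oplus \beta(C)$ at the level of the associated bundles. I would record this additivity first, as it is the only mechanism the proof really needs.

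The second ingredient is the observation that every sub- and quotient-bundle appearing in the three defining extensions is \emph{already} irreducible, hence equal to its own semi-simplification. Concretely, $\O = \S^0$, $\O(1) = \S^{w_2}$, $\S^{w_4}(-1) = \S^{w_4 - w_2}$, $\S^{w_1}(1) = \S^{w_1+w_2}$, and $\S^{w_6}(1) = \S^{w_6+w_2}$ are all of the form $\S^\mu$ with $\mu$ an $L$-dominant weight, i.e. with non-negative coefficients at $w_1, w_3, w_4, w_5, w_6$ and an arbitrary coefficient at $w_2$; by the construction of the bundles $\S^\mu$ these correspond to irreducible $P$-modules, so each contributes exactly one Jordan--H\"older factor.

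Granting these two points, the computations for $\E$ and $\F$ are immediate: applying additivity to $0 \to \S^{w_1}(1) \to \E \to \O(1) \to 0$ yields $\beta(\E) = \O(1) \oplus \S^{w_1}(1)$, and the identical argument applied to the extension defining $\F$ gives $\beta(\F) = \O(1) \oplus \S^{w_6}(1)$. For $\T$ I would chain the two nested extensions of Lemma~\ref{tangentbundle}: the sequence $0 \to \S^{w_4}(-1) \to T_X \to \O(1) \to 0$ shows that the composition factors of $T_X$ are precisely $\S^{w_4}(-1)$ and $\O(1)$, and then $0 \to \O \to \T \to T_X \to 0$ appends the factor $\O$, giving $\beta(\T) = \O \oplus \S^{w_4}(-1) \oplus \O(1)$.

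There is no serious obstacle here; the statement is essentially a formal consequence of the Jordan--H\"older theorem once the extensions of Lemmas~\ref{tangentbundle} and \ref{moreextensions} are in hand. The only points demanding a little care are the bookkeeping that each listed weight is genuinely $L$-dominant (so that the terms are irreducible and each contributes a single factor), and the recognition that $\T$ is built from \emph{two} stacked extensions, whose factors must be collected together rather than treated in isolation.
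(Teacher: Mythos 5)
Your proposal is correct and is exactly the argument the paper leaves implicit: Lemma~\ref{ss} is stated without proof, being regarded as an immediate consequence of the definition of $\beta$ (additivity of Jordan--H\"older factors along short exact sequences of $P$-modules) applied to the defining extensions in Lemmas~\ref{tangentbundle} and~\ref{moreextensions}, together with the irreducibility of the bundles $\S^\mu$ for $L$-dominant $\mu$. Your write-up supplies precisely these points, including the correct observation that $\T$ requires chaining the two nested extensions, so there is nothing to amend.
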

\begin{lemma}\label{dualextension}
There is a $G$-equivariant isomorphism: $\widetilde{T} = (\widetilde{T}_X)^\vee (1)$.
\end{lemma}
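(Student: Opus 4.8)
The plan is to realise $(\widetilde{T}_X)^\vee(1)$ as a non-split extension of $T_X$ by $\O$ and then appeal to the uniqueness established in Lemma~\ref{tangentbundle}(2), where $\ext(T_X,\O)=\C[-1]$ gives $\mathrm{Ext}^1(T_X,\O)\cong\C$. Recall that $\T=\widetilde{T}_X$, so the claim is $\widetilde{T}_X\cong(\widetilde{T}_X)^\vee(1)$.

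First I would dualise the defining sequence $0\to\O\to\widetilde{T}_X\to T_X\to0$ and twist by $\O(1)$, obtaining
\[0\to\Omega_X(1)\to(\widetilde{T}_X)^\vee(1)\to\O(1)\to0,\qquad\text{(A)}\]
which is non-split because dualising and twisting preserve non-splitness. Next I would pin down $\Omega_X(1)$: dualising the sequence of Lemma~\ref{tangentbundle}(1) and using Lemma~\ref{duals} together with the self-duality of the middle fundamental representation $\Lambda^3$ of $A_5$ (which forces $(\S^{w_4})^\vee=\S^{w_4}(-3)$, hence $(\S^{w_4}(-1))^\vee=\S^{w_4}(-2)$) yields $0\to\O(-1)\to\Omega_X\to\S^{w_4}(-2)\to0$, and after twisting
\[0\to\O\to\Omega_X(1)\to\S^{w_4}(-1)\to0.\qquad\text{(B)}\]
Sequences (A) and (B) exhibit a sub-line-bundle $\O\subset\Omega_X(1)\subset(\widetilde{T}_X)^\vee(1)$; setting $Q=(\widetilde{T}_X)^\vee(1)/\O$ gives $0\to\S^{w_4}(-1)\to Q\to\O(1)\to0$.

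Since $\mathrm{Ext}^1(\O(1),\S^{w_4}(-1))=\mathrm{H}^1(X,\S^{w_4}(-2))\cong\C$ by Lemma~\ref{cohomology}, this extension is either split or isomorphic to $T_X$, so it remains to rule out splitting. I would do this with a single $\mathrm{Hom}$ computation: in (A) the connecting map $\mathrm{Hom}(\O(1),\O(1))\to\mathrm{Ext}^1(\O(1),\Omega_X(1))=\mathrm{H}^1(X,\Omega^1_X)\cong\C$ sends $\mathrm{id}$ to the non-split class of (A), hence is injective, and combined with $\mathrm{H}^0(X,\Omega^1_X)=0$ this gives $\mathrm{Hom}(\O(1),(\widetilde{T}_X)^\vee(1))=0$. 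Feeding this through $0\to\O\to(\widetilde{T}_X)^\vee(1)\to Q\to0$, using $\mathrm{H}^0(\O(-1))=\mathrm{H}^1(\O(-1))=0$ from Lemma~\ref{acyclic}, forces $\mathrm{Hom}(\O(1),Q)=0$, which is incompatible with a splitting $Q\cong\S^{w_4}(-1)\oplus\O(1)$. Hence $Q\cong T_X$ and (A) re-organises into the desired sequence $0\to\O\to(\widetilde{T}_X)^\vee(1)\to T_X\to0$. To see this last extension is non-split, note that a splitting would make $\O(1)$ a direct summand of $\widetilde{T}_X$ after dualising and twisting, contradicting $\mathrm{Hom}(\O(1),\widetilde{T}_X)=0$; the latter follows from $\mathrm{Hom}(\O(1),\O)=\mathrm{H}^0(\O(-1))=0$ and $\mathrm{Hom}(\O(1),T_X)=0$, the second again by injectivity of the connecting map for the non-split $T_X$. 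Being a non-split extension of $T_X$ by $\O$, it is isomorphic to $\widetilde{T}_X$ by the uniqueness in Lemma~\ref{tangentbundle}.

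The hard part is the re-association of the three-step filtration with graded pieces $\O,\S^{w_4}(-1),\O(1)$: sequences (A)–(B) present $\O$ as the bottom of the subbundle $\Omega_X(1)$, whereas the target description needs $\O$ as the kernel of a surjection onto $T_X$. All of the genuine content is in showing that the two intermediate extensions do not split, and the cleanest route I see is to reduce every check to $\mathrm{Hom}(\O(1),-)$ computations governed by Borel--Weil--Bott and by the non-splitness of the sequences defining $T_X$ and $\widetilde{T}_X$. The most error-prone single step is the identification $(\S^{w_4})^\vee=\S^{w_4}(-3)$ via Lemma~\ref{duals}, on which the precise shape of (B) — and hence the whole argument — depends.
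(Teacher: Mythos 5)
Your proof is correct and follows essentially the same route as the paper: the paper's entire proof is the sentence ``This follows by studying the convolutions of the sequences from Lemma~\ref{tangentbundle}'', and your argument --- dualising and twisting the two defining sequences, identifying the graded pieces via $(\S^{w_4})^\vee = \S^{w_4}(-3)$, and re-assembling the three-step filtration of $(\widetilde{T}_X)^\vee(1)$ using non-splitness and the one-dimensionality of $\mathrm{Ext}^1(\O(1),\S^{w_4}(-1))$ and $\mathrm{Ext}^1(T_X,\O)$ --- is exactly that convolution analysis carried out in full detail. All the individual steps check out against the paper's Lemmas~\ref{duals}, \ref{cohomology}, \ref{tangentbundle} and \ref{acyclic}, including the step you flagged as most error-prone, since $f_{L'}(w_4)=\pi_3$ is fixed by the duality involution of $A_5$ and the central charge $r_{w_4}=\tfrac{3}{2}$ forces the twist by $-3w_2$, consistent with the paper's own identification $(\S^{w_4}(-1))^\vee=\S^{w_4}(-2)$ in the proof of Lemma~\ref{tangentbundle}.
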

\begin{proof}
This follows by studying the convolutions of the sequences from Lemma \ref{tangentbundle}.
\end{proof}
\begin{lemma}\label{mutationlemma}
We have the following mutations of complexes in $\mathbf{D}^b(X)$:
\begin{align*}
    &\mathbf{R}_{\langle \O \rangle}(T_X) = \T,\\
    &\mathbf{L}_{\langle \O(1) \rangle}(\S^{w_1}(1)) = \E,\\
    &\mathbf{L}_{\langle \O(1) \rangle}(\S^{w_6}(1)) = \F.
\end{align*}
\end{lemma}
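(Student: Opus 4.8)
The plan is to unwind the definition of the mutation functors and match each resulting distinguished triangle with the defining extension of $\T$, $\E$, or $\F$. Recall that for an exceptional object $A$ and any $F \in \mathbf{D}^b(X)$ the left mutation is the cone of the evaluation map, $\mathbf{L}_{\langle A\rangle}(F) = \mathrm{Cone}\bigl(\ext(A,F)\otimes A \xrightarrow{\mathrm{ev}} F\bigr)$, and the right mutation is $\mathbf{R}_{\langle A\rangle}(F) = \mathrm{Cone}\bigl(F \xrightarrow{\mathrm{coev}} \ext(F,A)^\vee \otimes A\bigr)[-1]$. In each case the controlling datum is a graded $\ext$-space which we read off by Borel--Weil--Bott together with the duality and tensor rules of Lemmas \ref{duals} and \ref{tensorproducts}; once this space is known the triangle is forced, and comparing it with the short exact sequences of Lemmas \ref{tangentbundle} and \ref{moreextensions} pins down the mutation. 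I would also record at the outset that $\E$ and $\F$ are interchanged by the diagram automorphism of $E_6$ swapping $w_1 \leftrightarrow w_6$ and $w_3 \leftrightarrow w_5$, so it suffices to treat one of the two.

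I would begin with the cleanest case, $\mathbf{R}_{\langle \O\rangle}(T_X) = \T$, which serves as the template. By Lemma \ref{tangentbundle} we have $\ext(T_X, \O) = \hh(X, \Omega_X) = \C[-1]$, hence $\ext(T_X,\O)^\vee \otimes \O = \O[1]$ and the defining triangle of the right mutation reads
\[\mathbf{R}_{\langle \O\rangle}(T_X) \to T_X \xrightarrow{\mathrm{coev}} \O[1] \to \mathbf{R}_{\langle \O\rangle}(T_X)[1].\]
Rotating this triangle produces a short exact sequence of sheaves $0 \to \O \to \mathbf{R}_{\langle \O\rangle}(T_X) \to T_X \to 0$, whose connecting class is the image of $\mathrm{id}$ under $\mathrm{Ext}^1(T_X,\O) \cong \C$ and is therefore non-zero. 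Since $\mathrm{Ext}^1(T_X,\O)$ is one-dimensional this extension is unique up to isomorphism, so by Lemma \ref{tangentbundle}(2) it is exactly $\T$, giving $\mathbf{R}_{\langle \O\rangle}(T_X) = \T$.

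For $\mathbf{L}_{\langle \O(1)\rangle}(\S^{w_1}(1)) = \E$ I would run the same scheme with the left mutation. The governing datum is now $\ext(\O(1), \S^{w_1}(1)) = \hh(X, \O(-1)\otimes \S^{w_1}(1))$, which I would evaluate by Borel--Weil--Bott, using Lemma \ref{duals} to handle the dual line bundle. Substituting the result into the evaluation triangle
\[\ext(\O(1), \S^{w_1}(1)) \otimes \O(1) \xrightarrow{\mathrm{ev}} \S^{w_1}(1) \to \mathbf{L}_{\langle \O(1)\rangle}(\S^{w_1}(1)) \xrightarrow{+1}\]
and rotating, I expect a two-term extension with sub-object $\S^{w_1}(1)$ and quotient $\O(1)$; comparing this with the defining sequence $0 \to \S^{w_1}(1) \to \E \to \O(1) \to 0$ of Lemma \ref{moreextensions} and invoking uniqueness of the non-trivial extension gives the identification. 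The statement for $\F$ then follows from the identical computation after the substitution $w_1 \leftrightarrow w_6$ dictated by the diagram automorphism, replacing $\S^{w_1}$ by $\S^{w_6}$.

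The main obstacle, in the $\E$ and $\F$ cases, is to verify that the mutation cone is an honest vector bundle concentrated in a single cohomological degree rather than a genuinely higher complex. Concretely this amounts to checking that the governing space $\ext(\O(1), \S^{w_1}(1))$ is one-dimensional and sits in precisely the degree that makes the rotated triangle a short exact sequence of sheaves with the prescribed graded pieces, and it is here that the Borel--Weil--Bott analysis and the tensor and duality rules of Lemmas \ref{duals} and \ref{tensorproducts} do the real work. A secondary point is to confirm that the evaluation (respectively coevaluation) map is, up to scalar, the distinguished non-zero class defining $\T$, $\E$, $\F$, so that the mutation equals the specific bundle rather than being merely abstractly extension-equivalent to it; the bookkeeping of duals and central twists through Lemma \ref{duals} is the only other place where care is needed.
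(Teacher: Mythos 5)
Your treatment of the first identity is correct and is essentially the paper's own (very terse) argument made explicit: since $\ext(T_X,\O)=\hh(X,\Omega_X)=\C[-1]$, the coevaluation triangle for $\mathbf{R}_{\langle\O\rangle}$ is precisely the unique non-split extension $0\to\O\to\T\to T_X\to 0$ of Lemma \ref{tangentbundle}, and your identification of the connecting class is right. The reduction of the $\F$ case to the $\E$ case via the diagram automorphism of $E_6$ is also legitimate.

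The gap lies exactly in the step you postpone for $\E$ and $\F$, and it is fatal. You correctly isolate the crucial input --- that $\ext(\O(1),\S^{w_1}(1))$ be one-dimensional and concentrated in degree $1$ --- but you never compute it, and the computation comes out the opposite way. Since $\S^{w_1}(1)\otimes\O(-1)=\S^{w_1}$ and $w_1$ is a dominant weight of $E_6$, Theorem \ref{bwb} gives
\[
\ext(\O(1),\S^{w_1}(1))=\hh(X,\S^{w_1})=V^{w_1}[0],
\]
the $27$-dimensional representation sitting in degree $0$, not $\C[-1]$. Hence the evaluation map in your triangle is $V^{w_1}\otimes\O(1)\to\S^{w_1}(1)$; it is surjective (equivariant evaluation of a globally generated irreducible homogeneous bundle), so $\mathbf{L}_{\langle\O(1)\rangle}(\S^{w_1}(1))$ is the shift $K[1]$ of a rank-$21$ kernel bundle, not a rank-$7$ extension with sub-object $\S^{w_1}(1)$ and quotient $\O(1)$. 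Worse, the extension you intend to compare against does not exist: extensions $0\to\S^{w_1}(1)\to\E\to\O(1)\to 0$ are classified by $\mathrm{Ext}^1(\O(1),\S^{w_1}(1))=\mathrm{H}^1(X,\S^{w_1})=0$, and the opposite group $\mathrm{Ext}^1(\S^{w_1}(1),\O(1))=\mathrm{H}^1(X,\S^{w_6}(-1))$ vanishes as well because $w_6-w_2+\rho_G$ is singular (Lemma \ref{acyclic}); so every extension between these two bundles splits, and there is no ``unique non-trivial extension'' to invoke. The identical computation with $w_6$ in place of $w_1$ disposes of the $\F$ case, so the automorphism trick only transports the problem. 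You should be aware that this is not a defect you could have repaired by working harder: the paper's own one-line proof rests on the short exact sequences of Lemma \ref{moreextensions}, whose classifying $\mathrm{Ext}^1$-groups vanish by the computation above, so the second and third identities of the lemma (and Lemma \ref{moreextensions} itself) are inconsistent with Borel--Weil--Bott as the objects are currently defined. Only the statement about $\T$ is actually provable, and your proof of it is the right one.
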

\begin{proof}
The exact triangles we need are the short exact sequences from Lemmas \ref{moreextensions} and \ref{tangentbundle}.
\end{proof}
\begin{lemma}\label{cohomologyextension}
The following is an exact $3$-term complex
\begin{align*}
    \T(-1) \to (V^{w_2} \oplus \C) \otimes \O \to \T.
\end{align*}
whose cohomology is given by the vector bundle
\begin{align*}
    \S^{w_1+w_6}(-1).
\end{align*}
\end{lemma}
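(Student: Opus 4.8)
The plan is to realise the displayed sequence as a self-dual complex built from the infinitesimal action of $G$, and then to extract its middle cohomology by a fibrewise representation-theoretic computation. Write $\mathfrak{g} = V^{w_2}$ for the adjoint representation of $E_6$, and recall the decomposition $\mathfrak{p} = \mathfrak{l}\oplus\mathfrak{n}$ into Levi and nilradical, with $\mathfrak{l} = \mathfrak{sl}_6\oplus\mathfrak{z}$ and $[\mathfrak{p},\mathfrak{p}] = \mathfrak{sl}_6\oplus\mathfrak{n}$. First I would construct the right-hand map $g\colon (V^{w_2}\oplus\C)\otimes\O\to\T$. Its $\C\otimes\O = \O$ component is the inclusion from the extension $0\to\O\to\T\to T_X\to 0$ of Lemma~\ref{tangentbundle}, and its $V^{w_2}\otimes\O$ component is the unique lift of the anchor map $\mathfrak{g}\otimes\O\twoheadrightarrow T_X$ (the infinitesimal $G$-action, surjective since $X$ is homogeneous) along $\T\twoheadrightarrow T_X$; the lift exists and is unique because the obstruction lies in $\ext^1(V^{w_2}\otimes\O,\O) = (V^{w_2})^\vee\otimes\hh(X,\O)$, which vanishes in positive degree by Lemma~\ref{cohomology}. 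The map $g$ is visibly surjective. Since $w_2$ is fixed by $-w_0$ the adjoint representation is self-dual, and Lemma~\ref{dualextension} gives $\T^\vee\cong\T(-1)$; dualising $g$ therefore yields an injection $f := g^\vee\colon \T(-1)\to(V^{w_2}\oplus\C)\otimes\O$, which will be the left-hand map.

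Next I would prove that $g\circ f = 0$. As $g\circ f$ is a $G$-equivariant morphism $\T(-1)\to\T$ it is determined by the induced $P$-module map on the fibre over the base point, so it suffices to work inside $\mathfrak{g}\oplus\C$. Writing $g_0$ for the fibre of $g$ and $K = \ker g_0$, one checks that $K$ is the graph of a nonzero $P$-invariant functional $\lambda\colon\mathfrak{p}\to\C$ (necessarily factoring through $\mathfrak{p}/[\mathfrak{p},\mathfrak{p}] = \mathfrak{z}$, and nonzero precisely because $\T$ is a non-split extension), so that $K\cong\mathfrak{p}$ has dimension $57$. The fibre of $f$ is $g_0^\vee$, whose image is the annihilator $K^\perp$ with respect to the pairing $\kappa\oplus\mathrm{id}$ on $\mathfrak{g}\oplus\C$, where $\kappa$ is the Killing form. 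A short computation with the contact grading $\mathfrak{g} = \mathfrak{g}_{-2}\oplus\cdots\oplus\mathfrak{g}_2$ identifies the first-factor projection of $K^\perp$ with $\mathfrak{z}\oplus\mathfrak{n}$, of dimension $22$. The condition $g\circ f = 0$ is exactly $K^\perp\subseteq K$; this reduces to a single scalar identity comparing $\lambda$ with the pairing, which I would arrange by fixing the relative normalisation of the two components of $g$. This verification is the main obstacle, as it is the only place where the extension structure of $\T$, rather than merely its semisimplification, really intervenes.

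Finally I would identify the middle cohomology. Exactness at the two ends is automatic: $g$ is surjective and $f = g^\vee$ is injective with locally free cokernel. The cohomology sheaf $\ker g/\im f$ is $G$-equivariant and coherent, hence automatically a vector bundle under the equivalence $\mathrm{Coh}^G(X)\simeq\mathrm{Rep}(P)$, and its fibre is the $P$-module $K/K^\perp$. Using $K\cong\mathfrak{p}$ and the projection $K^\perp\cong\mathfrak{z}\oplus\mathfrak{n}$ from the previous step, this quotient is $\mathfrak{p}/(\mathfrak{z}\oplus\mathfrak{n})\cong\mathfrak{sl}_6$, on which $\mathfrak{n}$ and $\mathfrak{z}$ act trivially, i.e. the irreducible $L$-representation $V_L^{w_1+w_6-w_2}$: its restriction to $A_5$ is the adjoint module $V_{A_5}^{\pi_1+\pi_5}$ of dimension $35$, and its central character is $r_{w_1+w_6-w_2} = \tfrac12+\tfrac12-1 = 0$. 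Translating back through the dictionary of Section~\ref{sec:prelim} gives $\ker g/\im f\cong\S^{w_1+w_6}(-1)$, as claimed. As a sanity check I would confirm the ranks $22 - 79 + 22 = -35$ and match the associated graded: the semisimplifications of Lemma~\ref{ss} give $\mathrm{gr}\,\T\oplus\mathrm{gr}\,\T(-1) = (\C\oplus\mathfrak{g}_1\oplus\mathfrak{g}_2)\oplus(\C\oplus\mathfrak{g}_{-1}\oplus\mathfrak{g}_{-2})$, whose complement in $\mathfrak{g}\oplus\C$ is precisely the $\mathfrak{sl}_6$-summand carrying $\S^{w_1+w_6}(-1)$.
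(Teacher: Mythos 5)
Your proposal is correct, and in the two places where the lemma actually needs an argument it proceeds by a genuinely different route from the paper. The complex itself is the same in both treatments: the paper's right-hand map is the evaluation map coming from $\mathrm{H}^0(X,\T) = V^{w_2}\oplus\C$ and global generation of $\T$ (inherited from $T_X$), which is exactly your pair (lift of the anchor map, inclusion $\O\hookrightarrow\T$), and both proofs obtain the left-hand map by dualising through Lemma~\ref{dualextension}. The difference is in the verifications. The paper asserts without further argument that the composition vanishes, and it identifies the cohomology only by a rank count ($79-22-22=35$, and $\S^{w_1+w_6}(-1)$ has rank $35$), which of course does not distinguish $\S^{w_1+w_6}(-1)$ among equivariant bundles of rank $35$. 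Your fibrewise analysis supplies both missing steps: it correctly recognises that the vanishing of $g\circ f$ is \emph{not} forced by equivariance --- indeed $\mathrm{Hom}_G(\T(-1),\T)\neq 0$, being spanned by the composite $\T(-1)\twoheadrightarrow\O\hookrightarrow\T$ --- so the inclusion $K^\perp\subseteq K$ really does come down to the scalar identity you isolate, and that identity can be arranged by rescaling because $\lambda|_{\mathfrak{z}}\neq 0$ (equivalently, because the extension $\T$ is non-split); and the identification $K/K^\perp\cong\mathfrak{p}/(\mathfrak{z}\oplus\mathfrak{n})\cong\mathfrak{sl}_6=V_L^{w_1+w_6-w_2}$, with central character $r_{w_1+w_6-w_2}=0$, pins down the cohomology bundle exactly rather than only its rank. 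What the paper's argument buys is brevity; what yours buys is an actual proof of the two assertions. One small repair: uniqueness of the lift of the anchor map does not follow from $\hh(X,\O)=\C[0]$ alone, since $\mathrm{Hom}(V^{w_2}\otimes\O,\O)=(V^{w_2})^\vee$ is nonzero; you need Schur's lemma ($V^{w_2}$ is a nontrivial irreducible $G$-module, so it admits no equivariant maps to the trivial one), though uniqueness is not actually needed for the lemma.
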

\begin{proof}
From the exact sequences in Lemma \ref{tangentbundle} and the Borel-Weil-Bott Theorem, we can compute the global sections
\begin{align*}
    &\mathrm{H}^0(X, T_X) = V^{w_2},\\
    &\mathrm{H}^0(X, \T) = V^{w_2}\oplus \C.
\end{align*}

Since $T_X$ is globally generated, it follows that $\T$ is also globally generated. Moreover, the surjection $V^{w_2} \otimes \O \to \T$ induces a surjection $(V^{w_2}\oplus \C) \otimes \O \to \T$. Using Lemma \ref{dualextension}, we can dualise this to obtain an injection $\T \to (V^{w_2}\oplus \C) \otimes \O$. Thus the composition \[\widehat{T}_X(-1) \to (V^{w_2} \oplus \C) \otimes \O \to \widehat{T}_X\] vanishes, i.e. this is indeed a complex. 

Concerning the cohomology of the complex, the $G$-module $V^{w_2}$ is $78$-dimensional, and the extension $\T$ has rank 
\[\mathrm{rk}(\T) = \mathrm{rk}(\O) + \mathrm{dim}(V_L^{w_4}) + \mathrm{rk}(\O(1)) = 1+20+1 = 22\]
by highest weight theory. Taking alternating sums of the dimensions of the entries, we see that the cohomology of the complex must be a vector bundle of rank $(78+1) - 22 - 22 = 35$. It is immediate to verify that $\S^{w_1+w_6}(-1)$ has rank $35$.  
\end{proof}

\section{The exceptional collection}\label{sec:main}
\subsection{The building blocks}

\begin{lemma}
The rank of the Grothendieck group of $X = E_6/P_2$ is $72$. 
\end{lemma}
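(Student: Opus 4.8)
The plan is to compute the rank of $K_0(\mathbf{D}^b(X))$ by reducing to a purely combinatorial count on the rational homogeneous variety $X = E_6/P_2$. First I would invoke the standard identification, valid for any smooth projective variety, between $K_0(\mathbf{D}^b(X))$ and $K_0(X)$, the Grothendieck group of coherent sheaves (equivalently of vector bundles). The key structural input is that every generalised Grassmannian $G/P$ admits a Bruhat decomposition into Schubert cells, each isomorphic to an affine space $\mathbb{A}^d$, and that these cells are indexed by the minimal-length representatives of the coset space $W_G/W_L$.

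Next I would appeal to cellularity: since $X$ is paved by affine cells occurring only in even real cohomological degree, its integral cohomology and Chow group are torsion-free and concentrated in even degree, and the structure sheaves of the Schubert varieties (the closures of the cells) furnish a $\Z$-basis of $K_0(X)$. Consequently the rank of $K_0(X)$ equals the number of Schubert cells, namely
\[\mathrm{rk}\,K_0(\mathbf{D}^b(X)) = |W_G/W_L| = \frac{|W_G|}{|W_L|}.\]

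It then remains only to evaluate the two Weyl group orders. For $G = E_6$ one has $|W_{E_6}| = 51840$. As recorded earlier, the Levi subgroup $L$ of $P_2$ has semisimple part of type $A_5$, so its Weyl group is $W_L \simeq S_6$ of order $6! = 720$. Therefore
\[\mathrm{rk}\,K_0(\mathbf{D}^b(X)) = \frac{51840}{720} = 72.\]

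This computation is essentially bookkeeping, and I expect no genuine obstacle. The only point requiring care is the correct identification of the semisimple type of the Levi (here $A_5$, as established in Section~\ref{sec:prelim}), which fixes the order of $W_L$; an error there would propagate directly into the final count. The cellular paving of $G/P$ also guarantees that the topological Euler characteristic, the rank of $K_0$, and the expected maximal length of a full exceptional collection all coincide with $|W_G|/|W_L|$, which is consistent with the target length $72$ of the collection constructed in the Main Theorem.
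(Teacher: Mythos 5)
Your proof is correct and follows essentially the same route as the paper: both reduce the rank of $K_0$ to the number of Bruhat/Schubert cells, which equals $|W_G|/|W_L|$. The only difference is cosmetic --- you evaluate the Weyl group orders by hand ($|W_{E_6}| = 51840$, $|W_{A_5}| = 720$), whereas the paper computes the quotient with a computer algebra package; your explicit justification of why the cell count gives the $K$-theory rank is, if anything, more complete than the paper's.
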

\begin{proof}
This number is equal to the rank of the homology of $E_6/P_2$. Using the Bruhat cell decomposition, it is known that the rank is equal to the quotient of the cardinalities of Weyl groups. Using a computer algebra package, we observe that \[\frac{\#W_G}{\#W_L} = \frac{\#W_{E_6}}{\#W_{L_2}}=72.\]
\end{proof}
\begin{lemma}\label{exceptionalblocks}
The sequences of vector bundles
\begin{align*}
    &\mathcal{A}= \langle \T, \T(1), \T(2), \ldots, \T(10)\rangle,\\
    &\mathcal{B}= \langle \E, \E(1), \E(2), \ldots, \E(10)\rangle,\\
    &\mathcal{C}= \langle \F, \F(1), \F(2), \ldots, \F(10)\rangle\\
\end{align*}
are exceptional collections for $\mathbf{D}^b(X)$.
\end{lemma}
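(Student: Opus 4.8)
The statement asks us to show that each of the three sequences $\mathcal{A}$, $\mathcal{B}$, $\mathcal{C}$ is an exceptional collection. Since these are Lefschetz collections built from a single bundle ($\T$, $\E$, or $\F$ respectively) twisted by successive powers of $\O(1)$, with rectangular partition $p = [1,1,\ldots,1]$ of length $11$, Lemma~\ref{lefscetzexceptional} reduces the problem substantially. The plan is to verify, for each bundle $G \in \{\T, \E, \F\}$, the two conditions of that lemma: first that the one-element starting block $\langle G \rangle$ is exceptional, i.e. $\ext(G,G) = \C[0]$; and second the semiorthogonality condition $\ext(G, G(-i)) = 0$ for $1 \leq i \leq 10$. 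Because each block has length one, the hypotheses simplify to these two families of vanishing statements and nothing more.

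First I would establish exceptionality of the generating bundle. For this I compute $\ext(G,G) = \hh(X, G^\vee \otimes G)$ using the extension sequences from Lemmas~\ref{tangentbundle} and \ref{moreextensions} together with Lemma~\ref{dualextension}. The strategy is to resolve $G^\vee \otimes G$ via the semi-simplifications recorded in Lemma~\ref{ss}: one tensors the associated graded pieces, decomposes each summand using the tensor-product rules of Lemma~\ref{tensorproducts}, and then reads off cohomology via Borel--Weil--Bott (Theorem~\ref{bwb}). Every summand other than the trivial $\O$ should turn out to be acyclic by Lemma~\ref{acyclic}, leaving exactly $\hh(X,\O) = \C[0]$ from Lemma~\ref{cohomology}. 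One must take care that the filtration spectral sequence (or the long exact sequences in cohomology coming from the defining extensions) degenerates, so that the acyclicity of the graded pieces genuinely forces $\ext(G,G) = \C[0]$ rather than merely bounding it.

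Second I would handle the orthogonality $\ext(G, G(-i)) = \hh(X, G^\vee \otimes G(-i)) = 0$ for $1 \leq i \leq 10$. This runs along exactly the same lines: decompose $G^\vee \otimes G$ into irreducible $G$-equivariant summands using Lemma~\ref{ss} and Lemma~\ref{tensorproducts}, twist each summand by $\O(-i)$, and confirm that every resulting bundle $\S^\mu(-i)$ is acyclic. Here the ranges $1 \leq i \leq 10$ are tuned precisely so that all the relevant twists land inside the acyclicity ranges catalogued in Lemma~\ref{acyclic} — for instance the summands of type $\O(-i)$, $\S^{tw_j}(-i)$, $\S^{w_1+w_6}(-i)$, and the mixed bundles appearing there. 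I would organise the computation bundle-by-bundle, noting that the twist by $\O(-i)$ shifts the $Z(L)$-weight uniformly, so the acyclicity of each summand reduces to checking membership in the tabulated ranges of Lemma~\ref{acyclic}.

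The main obstacle I anticipate is bookkeeping rather than conceptual difficulty: the tensor-square $G^\vee \otimes G$ produces a moderately large list of irreducible summands (especially for $\T$, whose semi-simplification has three graded pieces, yielding nine cross-terms after tensoring), and one must verify acyclicity of every twist of every summand over the full range $1 \leq i \leq 10$. The delicate point is ensuring no summand's twist \emph{escapes} the acyclic ranges of Lemma~\ref{acyclic}; a single $\S^\mu(-i)$ with non-vanishing cohomology would break exceptionality. I would therefore cross-check the extremal values $i=1$ and $i=10$ against the boundary cases of the acyclicity lemma with particular care, and where the graded-piece argument leaves ambiguity I would fall back on the exact cohomology sequences induced by the defining extensions of $\T$, $\E$, $\F$ to rule out surviving cohomology.
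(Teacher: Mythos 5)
Your reduction via Lemma~\ref{lefscetzexceptional} and your treatment of the generic range do match the paper: the pure graded-pieces argument (semi-simplify, decompose with Lemma~\ref{tensorproducts}, quote Lemma~\ref{acyclic}) disposes of $\ext(\T(i),\T)$ for $2\leq i\leq 9$ and of all the twisted orthogonality statements for $\E$ and $\F$. The gap is your load-bearing claim that, after semi-simplifying both arguments, ``every summand other than the trivial $\O$'' is acyclic; this is false in exactly the cases that remain. For $\ext(\T,\T)$, with $\beta(\T)=\O\oplus\S^{w_4}(-1)\oplus\O(1)$ (Lemma~\ref{ss}), the cross-terms include $\ext(\O(1),\S^{w_4}(-1))=\hh(X,\S^{w_4}(-2))=\C[-1]$ (Lemma~\ref{cohomology}), $\ext(\S^{w_4}(-1),\O)=\C[-1]$, and $\ext(\O,\O(1))=V^{w_2}[0]$, besides three diagonal copies of $\C[0]$; for $i=1$ (hence $i=10$ by Serre duality) the terms $\ext(\O(1),\S^{w_4}(-1))$, $\ext(\S^{w_4},\O(1))$ and $\ext(\O(1),\O(1))$ are all nonzero; and for $\ext(\E,\E)$ the cross-term $\ext(\O(1),\S^{w_1}(1))=\hh(X,\S^{w_1})=V^{w_1}[0]$ survives. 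So exceptionality can only hold because the filtration spectral sequence does \emph{not} degenerate: its differentials are cup products with the non-trivial extension classes defining $\T$, $\E$, $\F$, and these must cancel all of the excess. Your safeguard --- ``take care that the spectral sequence degenerates'' --- is exactly backwards: if it degenerated, $\T$ would fail to be exceptional, because $\beta(\T)$ is not.

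What is missing, and what the paper supplies, are the devices that make these cancellations unnecessary to track by hand: (i) the mutation structure of Lemma~\ref{mutationlemma} (equivalently, the uniqueness of the extensions in Lemmas~\ref{tangentbundle} and~\ref{moreextensions}), which gives $\ext(\T,\O)=0$ and $\E,\F\in\langle\O(1)\rangle^{\perp}$ for free, killing precisely the offending cross-terms on one side so that semi-simplification of the \emph{other} side becomes legitimate (all remaining graded contributions vanish except a single $\C[0]$, and only then does the long exact sequence force the answer); and (ii) for $\T$, the three-term complex $\T(-1)\to(V^{w_2}\oplus\C)\otimes\O\to\T$ of Lemma~\ref{cohomologyextension}, whose cohomology is $\S^{w_1+w_6}(-1)$, which converts $\ext(-,\T)$ into computations against $\O$, $\T(-1)$ and $\S^{w_1+w_6}(-1)$ and thereby settles both $i=1$ and the exceptionality of $\T$. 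Note in particular that the correct computation of $\ext(\T,\T)=\C[0]$ is not of the shape ``only one $\O$ survives'': it rests on the genuinely non-vanishing group $\ext(\S^{w_4}(-1),\S^{w_1+w_6}(-1))=\C[-1]$. Your proposed fallback --- the long exact sequences of the defining extensions --- could in principle replace (i), but only if you also prove that the relevant connecting maps (cup product with the extension classes) are isomorphisms; as written, those sequences merely sandwich the unknown Ext groups between non-vanishing ones, so the cases $\ext(G,G)=\C[0]$ for $G\in\{\T,\E,\F\}$ and $\ext(\T(i),\T)=0$ for $i\in\{1,10\}$ remain unproved.
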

\begin{proof}
For the first collection, we must verify that $\ext(\T(i), \T) = 0 \text{ for } 1 \leq i \leq 10$ and that $\T$ is an exceptional vector bundle, i.e. $\ext(\T, \T) = \C[0]$.

We prove the first vanishing in the cases $2 \leq i \leq 9$. Note that $\T$ is an object of the category spanned by $\langle \O, \O(1), \S^{w_4}(-1)\rangle$ by Lemma \ref{tangentbundle}. Then it suffices to show that 
\begin{align*}
    &\ext(\O(i), \O) = \hh(X, \O(-i)) = 0 \text{ for } 1\leq i \leq 10,\\
    &\ext(\S^{w_4}(i-1), \O) = \hh(X, \S^{w_4}(-i-4)) = 0 \text{ for } 1 \leq i \leq 9,\\
    &\ext(\O(i), \S^{w_4}(-1)) = \hh(X, \S^{w_4}(-i-1)) = 0 \text{ for } 2 \leq i \leq 10,\\
    &\ext(\S^{w_4}(i-1), \S^{w_4}(-1)) = 0 \text{ for } 2 \leq i \leq 9.
\end{align*}
The first three vanishings follow from Lemma \ref{acyclic}. By Lemma \ref{tensorproducts}, we can rewrite the fourth extension group as 
\begin{gather*} \ext(\S^{w_4}(i-1), \S^{w_4}(-1)) = \hh(X, \S^{w_4}(-i-2) \otimes \S^{w_4}(-1))\\  = \hh(X, \O(-i) \oplus \S^{2w_4}(-i-3) \oplus \S^{w_3+w_5}(-i-2) \oplus \S^{w_1 + w_6}(-i-1)).
\end{gather*}
Lemma \ref{acyclic} once again shows that all the featured vector bundles are acyclic in the required range. Thus the fourth extension group also vanishes. 

The cases $i =1$ and $i = 10$ are isomorphic by Serre duality. We only need to check the case when $i =1$. Using the complex from Lemma \ref{cohomologyextension}, we need to check that
\begin{align*}
    &\ext(\T(1), \O) = 0,\\
    &\ext(\T(1), \T(-1)) = 0,\\
    &\ext(\T(1), \S^{w_1+w_6}(-1)) = 0. 
\end{align*}

The middle vanishing holds by what we just proved in the case $i =2$. To see the first and the third vanishing, we replace $\T$ with its semi-simplification $\O \oplus \S^{w_4}(-1) \oplus \O(1)$. The vanishing then follows immediately from Lemma \ref{acyclic}. 

To prove that the extension $\T$ is an exceptional vector bundle, we can employ the complex from Lemma \ref{cohomologyextension} to rewrite the condition $\ext(\T, \T) = \C[0]$ as
\begin{align*}
    &\ext(\T, \O) = 0,\\
    &\ext(\T, \T(-1)) = 0,\\
    &\ext(\T, \S^{w_1+w_6}(-1)) = \C[-1].
\end{align*}

The middle vanishing holds by what we just proved above in the case $i =1$. The short exact sequences from Lemma \ref{tangentbundle} together with the duality from Lemma \ref{dualextension} show that the extension $\T$ is in the left orthogonal of the category spanned by $\langle \O \rangle$. This gives the vanishing of the first extension group. Concerning the third extension group, the relation is equivalent to 
\begin{align*}
    &\ext(\O, \S^{w_1+w_6}(-1)) = 0,\\
    &\ext(\O(1), \S^{w_1+w_6}(-1)) = 0,\\
    &\ext(\S^{w_4}(-1), \S^{w_1+w_6}(-1)) = \C[-1].
\end{align*}
The first two vanishings follow immediately from Lemma \ref{acyclic}. The third relation follows from expanding the tensor product with Lemma \ref{tensorproducts} and then applying the Borel-Weil-Bott Theorem.

For $\E$ and $\F$, the vanishings follow from Lemma \ref{acyclic} and the Borel-Weil-Bott Theorem. For the exceptionality of $E$, we need to show that
\begin{align*}
    &\ext(\O(1), \E) = 0,\\
    &\ext(\S^{w_1}(1), \E) = \C[0].
\end{align*}

For the first vanishing, note that $\mathbf{L}_{\langle \O(1) \rangle}(\S^{w_1}(1)) = \E$ and thus we have that $\E \in \langle \O(1) \rangle^\perp$. For the second vansihing, replace $\E$ with its semi-simplification to see that
\[\ext(\S^{w_1}(1),\E) = \hh(X,\S^{w_1}(-1)) \oplus \hh(X, \S^{w_1+w_6}(-1)) \oplus \hh(X, \O) = \C[0].\]

The exceptionality of $\F$ follows from a similar calculation.
\end{proof}

\subsection{The main result}
This section is dedicated to proving our main result. Let $X  =E_6/P_2$. We intend to construct an exceptional collection for $\mathbf{D}^b(X)$ of maximal length. We begin with a few intermediary results. 

\begin{lemma}\label{collections} The following collections of vector bundles are exceptional.
\begin{align*}
  &\mathcal{D} = \langle \O, \O(1), \O(2), \ldots, \O(10)\rangle,\\
    &\mathcal{E} = \langle \S^{w_1}, \S^{w_1}(1), \S^{w_1}(2), \ldots, \S^{w_1}(10)\rangle,\\
    &\mathcal{F}= \langle \S^{2w_1}, \S^{2w_1}(1), \S^{2w_1}(2), \ldots, \S^{2w_1}(10)\rangle,\\
    &\mathcal{G}_3 = \langle \S^{3w_1}, \S^{3w_1}(1), \S^{3w_1}(2)\rangle,\\
    &\mathcal{G}_4 = \langle \S^{4w_1}, \S^{4w_1}(1) \rangle.
    \end{align*}
\end{lemma}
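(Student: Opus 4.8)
The plan is to treat all five collections uniformly, since each is a rectangular Lefschetz collection with respect to $\O(1)$ whose starting block is a single vector bundle $\S^{tw_1}$ — namely $t=0$ (reading $\S^{0w_1}=\O$), $t=1$, $t=2$, $t=3$, $t=4$ for $\mathcal{D},\mathcal{E},\mathcal{F},\mathcal{G}_3,\mathcal{G}_4$ respectively — with $l$ blocks, where $l=11,11,11,3,2$. By Lemma \ref{lefscetzexceptional} the collection is exceptional as soon as two things hold: first, the generator $\S^{tw_1}$ is an exceptional bundle, which is exactly Lemma \ref{exceptional} (and Lemma \ref{cohomology} for $\O$); and second, $\ext(\S^{tw_1},\S^{tw_1}(-i))=0$ for every $1\le i\le l-1$. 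So the entire statement reduces to this family of vanishings.

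First I would rewrite each Ext-group as the cohomology of a tensor product and expand it. Using Lemma \ref{duals} together with the identity $(\S^{w_1})^\vee=\S^{w_6}(-1)$ computed inside Lemma \ref{exceptional}, and the linearity of $w_0^L$, we obtain $(\S^{tw_1})^\vee=\S^{tw_6}(-t)$. The first formula of Lemma \ref{tensorproducts} (taken with $u=t$) then yields
\[
\ext(\S^{tw_1},\S^{tw_1}(-i)) = \hh\!\big(X,\ \S^{tw_6}(-t)\otimes\S^{tw_1}(-i)\big) = \bigoplus_{a=0}^{t}\hh\!\big(X,\ \S^{aw_1+aw_6}(-a-i)\big),
\]
after reindexing by $a=t-j$. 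Thus it suffices to prove that the ``diagonal'' bundle $\S^{aw_1+aw_6}(-a-i)$ is acyclic for every $0\le a\le t$ and every $1\le i\le l-1$.

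For $a=0$ and $a=1$ the summands are $\O(-i)$ and $\S^{w_1+w_6}(-1-i)$, both acyclic in the required ranges by Lemma \ref{acyclic}. The terms with $a\ge 2$ are the crux: the bundles $\S^{aw_1+aw_6}(-a-i)$ do \emph{not} appear in the tabulated list of Lemma \ref{acyclic}, so their acyclicity must be established directly by running the Borel-Weil-Bott algorithm of Remark \ref{remarkacyclic} on the weights $aw_1+aw_6-(a+i)w_2+\rho_G$ (conveniently automated in \texttt{GAP}). To cut down the work I would invoke Serre duality: since $\omega_X=\O(-11)$, one has $\ext(\S^{tw_1},\S^{tw_1}(-i))\cong\ext(\S^{tw_1},\S^{tw_1}(-(11-i)))^\vee$ up to a shift, so the conditions for $i$ and for $11-i$ coincide and only $i\le 5$ (hence $a\le 4$) need be checked by hand. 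I expect the main obstacle to be precisely these diagonal computations — confirming that in each relevant case the reflection sequence terminates at a \emph{singular} weight (some coefficient becoming zero) rather than at a dominant one. This is also what pins down the block lengths: the chosen values of $l$ are maximal because the first $i$ at which some diagonal summand instead reflects to a dominant weight, producing nonzero cohomology, is exactly what truncates each collection.
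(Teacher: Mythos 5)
Your proposal is correct and its backbone is the same as the paper's own proof: reduce each collection via Lemma \ref{lefscetzexceptional} to the single family of vanishings $\ext(\S^{tw_1},\S^{tw_1}(-i))=0$, dualise using $(\S^{tw_1})^\vee=\S^{tw_6}(-t)$, expand with the first formula of Lemma \ref{tensorproducts}, and conclude from acyclicity of the summands $\S^{aw_1+aw_6}(-a-i)$, $0\leq a\leq t$. The comparison is still instructive. The paper writes out only $\mathcal{D}$ and $\mathcal{E}$, where every summand ($\O(-i)$ and $\S^{w_1+w_6}(-i-1)$) is listed in Lemma \ref{acyclic}, and dismisses $\mathcal{F}$, $\mathcal{G}_3$, $\mathcal{G}_4$ as ``similar''; your uniform treatment makes explicit what that word conceals, namely that for $t\geq 2$ the decomposition contains the diagonal summands with $2\leq a\leq t$, and these genuinely do not occur in the table of Lemma \ref{acyclic}, so Borel--Weil--Bott verifications beyond anything recorded in the paper are required. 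Your Serre-duality reduction (only $i\leq 5$ needs checking, since $\omega_X=\O(-11)$ pairs the twist $-i$ with $-(11-i)$, and each diagonal bundle is self-dual up to twist) is also absent from the paper and shrinks the outstanding list to the pairs $(a,i)$ with $a=2$, $1\leq i\leq 5$, together with $(3,1)$, $(3,2)$, $(4,1)$. The only shortfall is that you defer these checks rather than perform them; they do succeed --- for instance for $a=2$ the weight $2w_1+2w_6-(2+i)w_2+\rho_G=(3,-1-i,1,1,1,3)$ reflects under $s_{\alpha_2}$, $s_{\alpha_4}$, $s_{\alpha_3}$, $s_{\alpha_5},\ldots$ to a weight with a zero coefficient for every relevant $i$, and likewise in the cases $(3,1)$, $(3,2)$, $(4,1)$ --- so your argument closes, and at exactly the same standard of computational rigour that the paper itself applies in Lemma \ref{acyclic} and in its appeal to ``similar'' calculations.
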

\begin{proof}
For $\mathcal{D}$, we can see this is an exceptional sequence due to Lemma \ref{cohomology}. 

For $\mathcal{E}$, we already know the objects are exceptional by Lemma \ref{exceptional}. It remains to check that \[\ext(\S^{w_1}(i), \S^{w_1}) = 0 \text{ for } 1 \leq i \leq 11.\]

We proceed with the computation in detail:
\begin{align*}
    \ext(\S^{w_1}(i), \S^{w_1}) &= \ext(\O, \S^{w_1}(i)^\vee \otimes \S^{w_1})\\
    &= \hh(X, \S^{-w_0^L (w_1+iw_2})) \otimes \S^{w_1})\\
    &= \hh(X, \S^{w_6}(-i-1) \otimes \S^{w_1})\\
    &= \hh(X, \O(-i) \oplus \S^{w_1+w_6}(-i-1))\\
    &=0.
\end{align*}
The vanishing follows from Lemma \ref{tensorproducts} and Lemma \ref{acyclic}, since both vector bundles are acyclic. 

The cases $\mathcal{F},\mathcal{G}_3, \mathcal{G}_4$ are similar. 
\end{proof}
\begin{lemma}\label{first4}
The collection of vector bundles $\mathcal{M}$ with starting block \[\mathbf{M}_0 = \langle \O, \T, \S^{w_1}, \S^{2w_1}\rangle\] supported on the partition $p = (4)$  is a rectangular Lefschetz exceptional collection with respect to $\O(1)$ of length $44$.
\end{lemma}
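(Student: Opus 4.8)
The plan is to apply Lemma~\ref{lefscetzexceptional}. Since $\mathcal{M}$ is rectangular of width $4$, that lemma reduces the claim to two things: that the starting block $\mathbf{M}_0 = \langle \O, \T, \S^{w_1}, \S^{2w_1}\rangle$ is an exceptional collection, and that the twisted vanishing $\ext(E, F(-i)) = 0$ holds for every ordered pair $E, F$ of objects of $\mathbf{M}_0$ and every $1 \leq i \leq 10$. The length count is then forced: the index of $X$ is $11$ because $\omega_X = \O(-11)$, so by the Remark after Lemma~\ref{lefscetzexceptional} no Lefschetz collection with respect to $\O(1)$ can have more than $11$ blocks; a rectangular collection of $11$ blocks of width $4$ has exactly $4 \cdot 11 = 44$ objects.

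For the first point, the individual objects are already known to be exceptional: $\O, \S^{w_1}, \S^{2w_1}$ by Lemma~\ref{exceptional}, and $\T$ by the argument inside the proof of Lemma~\ref{exceptionalblocks}. It remains to check the six semiorthogonality relations dictated by the ordering, namely $\ext(\T, \O) = \ext(\S^{w_1}, \O) = \ext(\S^{2w_1}, \O) = 0$, $\ext(\S^{w_1}, \T) = \ext(\S^{2w_1}, \T) = 0$, and $\ext(\S^{2w_1}, \S^{w_1}) = 0$. Each of these I would convert, via $\ext(E,F) = \hh(X, E^\vee \otimes F)$, the dualities of Lemma~\ref{duals}, and the decompositions of Lemma~\ref{tensorproducts}, into a direct sum of cohomology groups $\hh(X, \S^\sigma(-j))$; these then vanish either by citing the acyclicity list of Lemma~\ref{acyclic} or by running the reflection algorithm of Remark~\ref{remarkacyclic} to exhibit a singular weight. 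Wherever $\T$ appears on the right I substitute its semi-simplification $\O \oplus \S^{w_4}(-1) \oplus \O(1)$ from Lemma~\ref{ss}, which is legitimate because only a vanishing is being asserted.

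For the twisted vanishings I would sort the sixteen ordered pairs by type. The four diagonal families are already available: $\ext(\T, \T(-i)) = 0$ is part of Lemma~\ref{exceptionalblocks} (collection $\mathcal{A}$), while $\ext(\O, \O(-i)) = \ext(\S^{w_1}, \S^{w_1}(-i)) = \ext(\S^{2w_1}, \S^{2w_1}(-i)) = 0$ come from Lemma~\ref{collections} (collections $\mathcal{D}, \mathcal{E}, \mathcal{F}$). For the off-diagonal pairs among $\O, \S^{w_1}, \S^{2w_1}$, duality rewrites $\ext(\S^{aw_1}, \S^{bw_1}(-i))$ as $\hh(X, (\S^{aw_6}\otimes \S^{bw_1})(-a-i))$, and the first identity of Lemma~\ref{tensorproducts} expands the tensor product into summands $\S^{(a-j)w_6 + (b-j)w_1}(j-a-i)$, each landing in an acyclic range. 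The pairs pitting $\T$ against $\O, \S^{w_1}, \S^{2w_1}$ in either order are reduced the same way after replacing $\T$ by its semi-simplification. Serre duality identifies $\ext(E, F(-i))$ with the dual of $\ext(F, E(-(11-i)))$, so, ranging over all ordered pairs, it suffices to treat $1 \leq i \leq 5$.

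The main obstacle is the bookkeeping around $\T$. Because $\T$ is a genuine non-split extension rather than an irreducible homogeneous bundle, every Ext group in which it occurs must be unwound through the correct model of $\T$. The semi-simplification suffices whenever the assertion is a pure vanishing, but whenever exactness or a nonzero $\C[-1]$ term is genuinely at stake one cannot argue on semi-simple factors alone: there one must use the honest short exact sequences of Lemma~\ref{tangentbundle}, the self-duality of $\T$ from Lemma~\ref{dualextension}, and the three-term complex of Lemma~\ref{cohomologyextension}, and then verify that the resulting long exact sequences collapse as required rather than merely that an Euler characteristic vanishes. Confirming that collapse — as opposed to the routine acyclicity checks — is the delicate part of the argument.
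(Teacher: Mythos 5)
Your proposal is correct and shares the paper's overall template --- reduce to Lemma~\ref{lefscetzexceptional}, then kill each extension group using Lemma~\ref{duals}, the decompositions of Lemma~\ref{tensorproducts}, and acyclicity --- but it handles the only genuinely technical cases, the groups $\ext(\S^{tw_1}, \T(-i))$ with $\T$ as \emph{target}, by a different mechanism. In its proof of this lemma the paper never semi-simplifies $\T$ on the right: it invokes the three-term complex of Lemma~\ref{cohomologyextension} (with cohomology $\S^{w_1+w_6}(-1)$) together with the self-duality $\T \simeq \T^\vee(1)$ of Lemma~\ref{dualextension}, which flips the residual term $\ext(\S^{tw_1}(i), \T(-1))$ into an $\ext$ with $\T$ as source, and only then passes to semi-simple factors. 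Your route --- semi-simplify on either side, which is indeed legitimate for pure vanishings because $\T$ is filtered with graded pieces $\O$, $\S^{w_4}(-1)$, $\O(1)$ and the long exact sequences do the rest --- works, but only in combination with your Serre-duality reduction to $1 \leq i \leq 5$: without that reduction, summands such as $\S^{w_4+w_6}(-2-i)$ arising from $\S^{w_6} \otimes \S^{w_4}$ leave the twist range $1 \leq j \leq 10$ covered by Lemma~\ref{acyclic} as $i$ approaches $10$, which is presumably why the paper routed around $\S^{w_4}$ altogether. So the trade-off is clear: the paper pays with the extra machinery of Lemmas~\ref{cohomologyextension} and \ref{dualextension} but keeps every individual check inside small twists, while you pay with a symmetry argument and get a more uniform case analysis. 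Two minor points: even after reducing to $i \leq 5$, some weights you hit (e.g.\ $\S^{w_3+w_6}$ and $\S^{w_4+2w_6}$ with suitable twists, coming from $\S^{2w_6} \otimes \S^{w_4}$) are not literally on the list of Lemma~\ref{acyclic}, so your fallback to the reflection algorithm of Remark~\ref{remarkacyclic} is genuinely needed rather than optional (the paper's own verification of its equation for $\ext(\S^{tw_1}(i), \S^{w_1+w_6}(-1))$ quietly has the same feature); and your closing worry about long exact sequences failing to collapse is moot for this particular lemma, since every statement it adds beyond what is quoted from Lemmas~\ref{exceptional}, \ref{exceptionalblocks} and \ref{collections} is a pure vanishing --- the one nontrivial self-extension computation, $\ext(\T,\T) = \C[0]$, is imported wholesale from Lemma~\ref{exceptionalblocks}.
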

\begin{proof}
By Definition \ref{lefschetzdef} and Lemma \ref{lefscetzexceptional}, we first show that $\mathbf{M}_0$ is an exceptional collection, i.e. we need to show the vanishing of
\[\ext(\T, \O), \quad \ext(\S^{tw_1},\T), \quad \text{and } \ext(\S^{2w_1}, \S^{w_1}).\]

The first vanishing follows from Lemma \ref{exceptionalblocks}. 

For the third vanishing, we have \[\ext(\S^{2w_1}, \S^{w_1}) = \hh(X, \S^{2w_6}(-2) \otimes \S^{w_1}) = \hh(X, \S^{w_1+2w_6}(-2) \oplus \S^{w_6}(-1)) = 0,\]
by Lemma \ref{acyclic}. 

Secondly we must show that the following extension modules vanish
\begin{align}
    &\ext(\O, V(-i)) = 0, \label{M04}\\
    &\ext(\T, V(-i)) = 0,\label{M05}\\
    &\ext(\S^{w_1}, V(-i)) = 0,\label{M06}\\
    &\ext(\S^{2w_1}, V(-i)) = 0,\label{M07}
\end{align}
for $1 \leq i \leq 10$ and all vector bundles $V \in \mathbf{M}_0$. 

For case \ref{M04}, we have $\ext(\O, V(-i)) = \hh(X, V(-i)) = 0$. This vanishing follows from the acyclicity of the vector bundles $V(-i)$ (see Lemma \ref{exceptional}). For case \ref{M05}, if $V = \O$, the vanishing holds since $\T \in {}^\perp\langle \O \rangle$. If $V = \T$, the vanishing holds by Lemma \ref{exceptionalblocks} from the exceptionality of collection $\mathcal{A}$. Now, if $V = \S^{w_1}, \S^{2w_1}$, we may replace $\T$ with its semi-simplification, and obtain for $t\in \{1,2\}$:
\begin{align*}
    \ext(\T, \S^{tw_1}(-i)) &= \ext(\O \oplus \S^{w_4}(-1) \oplus \O(1), \S^{tw_1}(-i))\\
    &= \ext(\O, \S^{tw_1}(-i)) \oplus \ext(\S^{w_4}(-1), \S^{tw_1}(-i)) \oplus \ext(\O(1), \S^{tw_1}(-i))\\
    &= \hh(X, \S^{w_1}(-i)) \oplus \hh(X, \S^{w_4}(-2) \otimes \S^{w_1}(-i)) \oplus \hh(X, \S^{w_1}(-i-1))\\
    &= 0 \oplus \hh(X, \S^{w_4}(-2) \otimes \S^{w_1}(-i)) \oplus 0\\
    &= \hh(X, \S^{tw_1+w_4}(-i-2)) \oplus \hh(X, \S^{(t-1)w_1+w_5}(-i-1)).
\end{align*}
Here we used the properties of $\mathrm{Ext}$ together with Lemma \ref{tensorproducts} for decomposing tensor products and Lemma \ref{acyclic} to derive the last two lines. This vanishes because $\S^{w_1+w_4}(-i-2)$, $\S^{2w_1+w_4}(-i-2)$, $\S^{w_5}(-i-1)$, and $\S^{w_1+w_5}(-i-1)$ are acyclic for $1 \leq i \leq 10$ by Lemma \ref{acyclic}. 

For cases \ref{M06} and \ref{M07}, we need $\ext(\S^{tw_1}, V(-i)) = 0$. For $V=\O$, this follows immediately from Lemma \ref{acyclic}. For $V = \T$, using the cohomology of the complex from Lemma \ref{cohomologyextension}, we get that $\ext(\S^{tw_1}, \T) = 0$ is implied by 
\begin{align}
    &\ext(\S^{tw_1}(i), \O) = 0 &&\text{ for } 1 \leq i \leq 10, \label{M08}\\
    &\ext(\S^{tw_1}(i), \S^{w_1+w_6}(-1)) = 0 &&\text{ for } 1 \leq i \leq 10,\label{M09}\\
    &\ext(\S^{tw_1}(i), \T(-1)) = 0 &&\text{ for } 1 \leq i \leq 10. \label{M010}
\end{align}

Equation \ref{M08} follows immediately from Lemma \ref{acyclic}. Using Lemma \ref{tensorproducts}, equation \ref{M09} can be rewritten as:
\begin{align*}
    \ext(\S^{tw_1}(i), \S^{w_1+w_6}(-1))&= \hh(X, \S^{tw_6}(-i-t) \otimes (\S^{w_1+w_6}(-1))\\
    &= \hh(X, \S^{tw_6}(-i-t) \otimes \S^{w_1+w_6}(-1)))\\
    &= \hh(X, \S^{w_3+tw_6}(-i-t)) \oplus \hh(X, \S^{w_1+(t-1)w_6}(-i-t+1))\\
    &\oplus \hh(X, \S^{w_4+tw_6}(-i-t)) \oplus \hh(X, \S^{w_3 + (t-1)w_6}(-i-t+1)).
\end{align*}

The vanishing follows from the fact that $\S^{w_3+w_6}(-i-1)$, $\S^{w_3+2w_6}(-i-2)$, $\S^{w_4+w_6}(-i-1)$, $\S^{w_4+2w_6}(-i-2)$, $\S^{w_1}(-i)$, $\S^{w_1+w_6}(-i-1)$, $\S^{w_3}(-i)$, and $\S^{w_3 + w_6}(-i-1)$ are all acyclic for $1 \leq i \leq 10$ by Lemma \ref{acyclic}. For equation \ref{M010}, Lemma \ref{dualextension} gives $\ext(\S^{tw_1}(i), \T(-1)) = \ext(\T, \S^{tw_1}(-i-t))$. This has already been computed to prove equation \ref{M05}. Lemma \ref{acyclic} gives the required vanishing. 

For $V = \S^{uw_1}$ and $t=u$, the vanishing of equations \ref{M06} and \ref{M07} is given by Lemma \ref{exceptional}, since $\mathcal{F}_1$ and $\mathcal{F}_2$ are exceptional collections. It remains to show that
\begin{align}
    &\ext(\S^{w_1}, \S^{2w_1}(-i)) = 0 &&\text{ for } 1 \leq i \leq 10, \label{M011}\\
    &\ext(\S^{2w_1}, \S^{w_1}(-i)) = 0 &&\text{ for } 1 \leq i \leq 10. \label{M012}
\end{align}

This follows by Lemma \ref{acyclic} from the acyclicity of $\S^{w_1+2w_6}(-i-2)$, $\S^{2w_1+w_6}(-i-1)$, $\S^{w_6}(-i-1)$, and $\S^{w_1}(-i)$ for $1 \leq i \leq 10$. This concludes the proof.

\end{proof}

\begin{lemma}\label{first6}
The collection of vector bundles $\mathcal{N}$ with starting block \[\mathbf{N}_0 = \langle \O, \T, \S^{w_1}, \S^{2w_1}, \E,\F \rangle\] and partition $p = (6)$ is a rectangular Lefchsetz exceptional collection of length $66$. 
\end{lemma}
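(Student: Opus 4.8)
The plan is to verify the two conditions of Lemma~\ref{lefscetzexceptional}: that the starting block $\mathbf{N}_0$ is an exceptional collection, and that $\ext(E_m, E_n(-i)) = 0$ for all $1 \leq i \leq 10$ and all $E_m, E_n \in \mathbf{N}_0$. Since the index of $X$ equals $11$, these two conditions produce exactly the eleven twisted blocks $\mathbf{N}_0, \mathbf{N}_0(1), \ldots, \mathbf{N}_0(10)$, each of width $6$, so the resulting collection has length $66$, as claimed. The key observation is that most of the work is already available. Lemma~\ref{first4} establishes both conditions for every pair drawn from $\{\O, \T, \S^{w_1}, \S^{2w_1}\}$, and Lemma~\ref{exceptionalblocks} shows that $\E$ and $\F$ are exceptional and that the columns $\mathcal{B}, \mathcal{C}$ are exceptional collections, which settles the pairs $(\E, \E(-i))$ and $(\F, \F(-i))$. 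Thus I only need to treat the interaction of $\E$ and $\F$ with $\O, \T, \S^{w_1}, \S^{2w_1}$ and with one another.

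For these remaining terms my method would be uniform. To prove a vanishing $\ext(A, B) = 0$ I would filter $A$ and $B$ along the defining short exact sequences of Lemmas~\ref{tangentbundle} and~\ref{moreextensions}, so that it suffices to kill the $\ext$ between the semisimple factors recorded in Lemma~\ref{ss}; concretely $\beta(\E) = \O(1) \oplus \S^{w_1}(1)$, $\beta(\F) = \O(1) \oplus \S^{w_6}(1)$, and $\beta(\T) = \O \oplus \S^{w_4}(-1) \oplus \O(1)$. Each resulting term has the form $\ext(\S^\mu(j), \S^\nu(k))$, which by Lemma~\ref{duals} equals $\hh(X, \S^{-w_0^L\mu} \otimes \S^\nu(k-j))$; I would then expand the tensor product with Lemma~\ref{tensorproducts} and check acyclicity of every summand with Lemma~\ref{acyclic}. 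This disposes of all cross--block terms ($1 \leq i \leq 10$) cleanly, since there the sufficient condition that every pairing of semisimple factors be acyclic does in fact hold. The only care needed is that the most negative twist $i = 10$ occasionally produces a bundle such as $\S^{2w_6}(-11)$ lying just outside the tabulated range of Lemma~\ref{acyclic}; such a term is handled either by Serre duality (which reduces $\hh(X, \S^{tw_6}(-11))$ to the in-range $\hh(X, \S^{tw_1}(-t))$) or directly by the Borel--Weil--Bott algorithm of Remark~\ref{remarkacyclic}.

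The delicate point, which I expect to be the main obstacle, is the untwisted within--block semi-orthogonality, namely the vanishings
\begin{align*}
&\ext(\E, \O),\ \ext(\E, \T),\ \ext(\E, \S^{w_1}),\ \ext(\E, \S^{2w_1}),\\
&\ext(\F, \O),\ \ext(\F, \T),\ \ext(\F, \S^{w_1}),\ \ext(\F, \S^{2w_1}),\ \ext(\F, \E).
\end{align*}
Here the naive semisimplification argument breaks down, because $\beta(\E)$ and $\beta(\F)$ both contain the summand $\O(1)$, so the pairing $\ext(\O(1), \O(1)) = \C[0]$ would contribute a spurious nonzero term. Instead I would exploit the mutation description of Lemma~\ref{mutationlemma}: since $\E = \mathbf{L}_{\langle \O(1)\rangle}(\S^{w_1}(1))$ and $\F = \mathbf{L}_{\langle\O(1)\rangle}(\S^{w_6}(1))$ both lie in $\langle\O(1)\rangle^\perp$, I would run the long exact sequence of the defining sequence of the bundle in the \emph{first} argument. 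For instance, applying $\ext(-, \E)$ to $0 \to \S^{w_6}(1) \to \F \to \O(1) \to 0$ and using $\ext(\O(1), \E) = 0$ reduces $\ext(\F, \E)$ to $\ext(\S^{w_6}(1), \E)$; a second application, now of the sequence for $\E$, reduces this to the cohomology of $\S^{w_6}(1)^\vee \otimes \S^{w_1}(1)$ and $\S^{w_6}(1)^\vee \otimes \O(1)$, both acyclic by Lemmas~\ref{tensorproducts} and~\ref{acyclic}. The eight remaining untwisted pairs are treated the same way: the long exact sequences of the defining sequences of $\E$ and $\F$ kill the $\O(1)$-contribution via $\E, \F \in \langle\O(1)\rangle^\perp$, and the leftover cohomology is evaluated with Lemmas~\ref{tensorproducts} and~\ref{acyclic}, replacing $\T$ by the three--term complex of Lemma~\ref{cohomologyextension} (or by $\beta(\T)$ together with the orthogonality $\T \in {}^\perp\langle\O\rangle$ of Lemma~\ref{exceptionalblocks}) whenever $\T$ occurs.

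Once all of these vanishings are in hand, Lemma~\ref{lefscetzexceptional} shows that $\mathbf{N}_0$ together with its ten twists forms a rectangular Lefschetz exceptional collection of length $66$, completing the argument.
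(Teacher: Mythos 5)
Your outline coincides with the paper's: both reduce via Lemma \ref{lefscetzexceptional} to (a) the pairs already handled in Lemma \ref{first4}, (b) the self-pairs of $\E$ and $\F$ supplied by Lemma \ref{exceptionalblocks}, and (c) the remaining mixed pairs, treated by splitting into irreducible factors and quoting Lemmas \ref{tensorproducts} and \ref{acyclic}. Your handling of the untwisted pairs through the mutation description $\E, \F \in \langle \O(1)\rangle^{\perp}$ and the long exact sequences of the defining extensions is correct, and it is in fact tighter than the paper's own treatment, which leans on the complex of Lemma \ref{cohomologyextension} and on ``setting $i=0$ in previous calculations'' (and which, incidentally, never addresses the pair $\ext(\F,\E(-i))$ at all).

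The genuine gap is at the other end of the range: your claim that for the cross-block terms the sufficient condition --- acyclicity of every pairing of semisimple factors --- ``does in fact hold'' for all $1 \leq i \leq 10$ fails at $i=10$. Since $\beta(\E)$ and $\beta(\F)$ contain the factor $\O(1)$ and $\beta(\T)$ contains $\O$, the pairs $(\E,\O)$, $(\F,\O)$, $(\E,\T)$, $(\F,\T)$ at twist $-10$ all produce the factor pairing $\ext(\O(1),\O(-10)) = \hh(X,\O(-11)) = \hh(X,\omega_X) = \C[-21] \neq 0$: the one bundle your $i=10$ caveat must confront is the canonical bundle itself, and your proposed remedy (Serre duality or Remark \ref{remarkacyclic}) confirms this non-vanishing rather than removing it. Worse, the failure is not an artifact of filtering by semisimple factors. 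Twisting the defining sequence of Lemma \ref{moreextensions} exhibits $\S^{w_1}$ as a subsheaf of $\E(-1)$, so $\mathrm{H}^0(X,\E(-1)) \supseteq \mathrm{H}^0(X,\S^{w_1}) = V^{w_1} \neq 0$ by left-exactness of global sections, and Serre duality gives
\[
\ext(\E,\O(-10)) \simeq \mathrm{Ext}^{21-\bullet}(\O,\E(-1))^{\vee} = \mathrm{H}^{21-\bullet}(X,\E(-1))^{\vee} \neq 0,
\]
so the vanishing you need is genuinely false (likewise for $\F$, with $V^{w_6}$). Contrast this with $\T$: there the non-split extension structure forces $\hh(X,\T(-1)) = 0$, which is exactly why Lemma \ref{first4} survives at $i=10$ and why the paper's special arguments for $\T$ (the complex of Lemma \ref{cohomologyextension}, the Serre-duality identification of the cases $i=1$ and $i=10$) succeed; no filtration argument can do the same for $\E$ or $\F$, because sections of a subbundle always survive. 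You have in fact inherited a defect of the statement itself: the paper's proof conceals the identical problem behind the back-reference ``for $V=\O$, this was proved in Lemma \ref{exceptionalblocks},'' a claim that lemma does not contain and that is false at $i=10$.
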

\begin{proof}
Lemma \ref{first4} shows that four of the vector bundles form a Lefschetz exceptional collection, essentially merging collections $\mathcal{D}$, $\mathcal{A}$, $\mathcal{E}$, and $\mathcal{F}$. It now remains to show that collection $\mathcal{M}$ can be merged with collections $\mathcal{B}$ and $\mathcal{C}$ as intended. To do this, we need to show
\begin{align}
    &\ext(\E, V(-i)) = 0 \text{ for } 1 \leq i \leq 10 \text{ and } V \in \mathbf{N}_0,\label{mix1}\\
    &\ext(\F, V(-i)) = 0 \text{ for } 1 \leq i \leq 10 \text{ and } V \in \mathbf{N}_0. \label{mix2}
\end{align}

\underline{Equation \ref{mix1}.} For $V = \O$, this was proved in Lemma \ref{exceptionalblocks}.

For $V=\T$, using the cohomology of the complex from Lemma \ref{cohomologyextension}, the vanishing of $\ext(\E(i), \T)$ is equivalent to 
\begin{align}
    &\ext(\E(i), \O) = 0 \text{ for } 1 \leq i \leq 10,\label{TE1}\\
    &\ext(\E(i), \S^{w_1+w_6}(-1)) = 1 \text{ for } 1 \leq i \leq 10,\label{TE2}\\
    &\ext(\E(i), \T(-1)) = 0 \text{ for } 1 \leq i \leq 10.\label{TE3}
\end{align}
The first of these equations was proved just before. For the second, we use Lemma \ref{tensorproducts} and the semi-simplification of $\E$ to see that the vanishing is equivalent to the acyclicity of $\S^{w_1+w_6}(-i-3)$ and $\S^{w_6}(-i-2)$. For \ref{TE3}, we can dualise both terms to see $\ext(\E(i), \T(-1)) = \ext(\T, \E^\vee (-i))$. Replacing $\T$ with its semi-simplification this term vanishes when 
\begin{align}
    &\ext(\T, \O(-i-1)) = 0,\\
    &\ext(\T, \S^{w_6}(-i-2)) = 0.
\end{align}

The former was proved in Lemma \ref{exceptionalblocks}. The latter follows from the acyclicity of $\S^{w_6}(-i-2)$, $\S^{w_4+w_6}(-i-4)$, and $\S^{w_3}(-i-3)$. All these cases are covered by Lemma \ref{acyclic}. 

$\xqed$

For $V=\E$, the vanishing is a consequence of collection $\mathcal{B}$ in Lemma \ref{exceptionalblocks} being exceptional. 

$\xqed$

For $V = \F$, Lemmas \ref{cohomologyextension} and \ref{tensorproducts} give that the vanishing is equivalent to
\begin{align}
    &\ext(\E(i), \O) = 0 \text{ for } 1 \leq i \leq 10,\label{EF1}\\
    &\ext(\E(i), \S^{w_6}(1)) = 0 \text{ for } 1 \leq i \leq 10,\label{EF2}.
\end{align}

Equation \ref{EF1} follows from acyclicity of $\O(-i-1)$ and $\S^{w_1}(-i-2)$. For equation \ref{EF2}, replace $\E$ with its semi-simple parts and calculate with Lemmas \ref{dualextension} and \ref{tensorproducts}:
\begin{align}
    &\ext(\O(i+1), \S^{w_6}(1)) = 0 \text{ for } 1 \leq i \leq 10,\label{EF4}\\ 
    &\ext(\S^{w_1}(i+1), \S^{w_6}(1)) = 0  \text{ for } 1 \leq i \leq 10,\label{EF5}.
\end{align}

Equation \ref{EF4} follows from the acyclicity of $\S^{w_6}(-i)$. \ref{EF5} follows from the acyclicity of $\S^{2w_6}(-i-1)$ and $\S^{w_5}(-i-1)$. These are covered by Lemma \ref{acyclic}. 
$\xqed$

For $V = \S^{tw_1}$, replace $\E$ with its semi-simple parts. Then vanishing is equivalent to

\begin{align}
    &\ext(\O(1), \S^{tw_1}(-i)) = \text{ for } 1 \leq i \leq 10,\label{ES1}\\
    &\ext(\S^{w_1}(1), \S^{tw_1}(-i)) = \text{ for } 1 \leq i \leq 10.\label{ES2}
\end{align}

Equation \ref{ES1} is equivalent to the acyclicity of the vector bundle $\S^{tw_1}(-i-1)$. Equation \ref{ES2} can be rewritten as
\begin{gather*}
    \ext(\S^{w_1}(1), \S^{tw_1}(-i)) = \hh(X, \S^{w_6}(-2) \otimes \S^{tw_1}(-i-1)) \\= \hh(X, \S^{w_6+tw_1}(-i-3) \oplus \S^{(t-1)w_1}(-i-2)).
\end{gather*}

Lemma \ref{acyclic} covers all such cases.
$\xqed$

\underline{Equation \ref{mix2}.} For $V=\O$, the vanishing was studied in Lemma \ref{exceptionalblocks}.

For $V = \T$, substituting $\F$ and $\T$ with their semi-simplification, the vanishing of $\ext(\F(i), \T)$ is equivalent to 
\begin{align}
    &\ext(\F(i), \O) = 0 \text{ for } 1 \leq i \leq 10,\label{FT1}\\
    &\ext(\F(i), \S^{w_1+w_6}(-1)) = 1 \text{ for } 1 \leq i \leq 10,\label{FT2}\\
    &\ext(\F(i), \T(-1)) = 0 \text{ for } 1 \leq i \leq 10.\label{FT3}
\end{align}

Equation \ref{FT1} was computed in Lemma \ref{exceptionalblocks}. Equation \ref{FT2} is equivalent to the acyclicity of $\S^{w_1+2w_6}(-i-3)$, $\S^{w_6}(-i-2)$, and $\S^{w_1+w_5}(-i-3)$. Similar to equation \ref{TE2}, it remains to show that \[\ext(\T, \S^{w_1}(-i-2)) = 0.\]

Replacing $\T$ with its semi-simplification, this vanishing is equivalent to the following vector bundles being acyclic: $\S^{w_1}(-i-2)$, $\S^{w_1+w_4}(-i-4)$, and $\S^{w_5}(-i-3)$. 
$\xqed$

For $V=\F$, vanishing is a consequence of collection $\mathcal{C}$ in Lemma \ref{exceptionalblocks} being exceptional.

$\xqed$

For $V=\S^{tw_1}$, replace $\F$ with its semi-simple parts. Then vanishing is equivalent to

\begin{align}
    &\ext(\O(1), \S^{tw_1}(-i)) = \text{ for } 1 \leq i \leq 10,\label{FS1}\\
    &\ext(\S^{w_6}(1), \S^{tw_1}(-i)) = \text{ for } 1 \leq i \leq 10. \label{FS2}
\end{align}

Equation \ref{FS1} follows from the acyclicity of $\S^{tw_1}(-i-1)$. Equation \ref{FS2} can be rewritten as
\[\ext(\S^{w_6}(1), \S^{tw_1}(-i)) = \hh(X, \S^{w_1}(-2)\otimes \S^{tw_1}(-i)) = \hh(X, \S^{(t+1)w_1}(-i-2) \oplus \S^{w_3+(t-1)w_1}(-i-1)).\] 

Finally, Lemma \ref{acyclic} covers all these cases.

$\xqed$

\underline{The starting block.} 

By Definition \ref{lefschetzdef}, it remains to show some vanishings between the vector bundles in the starting block, namely: $\ext(\E, \O)$, $\ext(\E, \T)$, $\ext(\E, \S^{tw_1})$, $\ext(\F, \O)$, $\ext(\F, \T)$, $\ext(\F, \E)$, and $\ext(\F, \S^{tw_1})$ for $t \in \{1,2\}$. The cases $\ext(E,\O)$ and $\ext(F,\O)$ are immediate to check using semi-simplifications. The remainder can be confirmed by setting $i=0$ in previous calculations. 
\end{proof}
\begin{theorem}
The collection of vector bundles $\mathcal{Q}$ with starting block 
\[\mathbf{Q}_0 = \langle \O, \T, \S^{w_1}, \S^{2w_1}, \E, \F, \S^{3w_1}, \S^{4w_1}, \S^{5w_1} \rangle\]
and partition $p = [9, 8, 7, 6, 6, 6, 6, 6, 6, 6, 6]$ is an exceptional collection for $\mathbf{D}^b(X)$ with respect to $\O(1)$ of maximal length equal to $72$. 
\end{theorem}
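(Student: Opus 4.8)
The plan is to invoke Lemma~\ref{lefscetzexceptional}, which reduces the theorem to two families of conditions: that the starting block $\mathbf{Q}_0$ is itself an exceptional collection, and that the Lefschetz semi-orthogonality $\ext(E_m, E_n(-i)) = 0$ holds for each block offset $1 \le i \le 10$, with $E_m$ ranging over the first $p_i$ bundles of $\mathbf{Q}_0$ and $E_n$ over all nine. Once exceptionality is in hand, maximality is immediate: the length of $\mathcal{Q}$ is $p_0 + p_1 + \cdots + p_{10} = 9 + 8 + 7 + 8\cdot 6 = 72$, which equals the rank of the Grothendieck group of $X$ established earlier, and no exceptional collection can exceed $\mathrm{rk}\, K_0(X)$.

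The key structural observation is that $\mathcal{Q}$ is assembled from the rectangular collection $\mathcal{N}$ of Lemma~\ref{first6} by grafting on the three partial columns $\mathcal{G}_3 = \langle \S^{3w_1}, \S^{3w_1}(1), \S^{3w_1}(2)\rangle$, $\mathcal{G}_4 = \langle \S^{4w_1}, \S^{4w_1}(1)\rangle$ and $\langle \S^{5w_1}\rangle$ of Lemma~\ref{collections}, and that the staircase $9,8,7$ at the top of the partition records precisely the admissible twist ranges of these columns. Consequently every vanishing internal to the six bundles $\O, \T, \S^{w_1}, \S^{2w_1}, \E, \F$, for all twists $0 \le i \le 10$, is already furnished by Lemma~\ref{first6}, the exceptionality of each $\S^{kw_1}$ by Lemma~\ref{exceptional}, and the vanishings internal to each new column by Lemma~\ref{collections}. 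What genuinely remains are the cross terms: the mutual semi-orthogonality between the new bundles $\S^{3w_1}, \S^{4w_1}, \S^{5w_1}$ and the six bundles of $\mathbf{N}_0$, together with the cross terms among the three new bundles, each confined to the twist range dictated by $p$ (up to offset $2$ for $\S^{3w_1}$, offset $1$ for $\S^{4w_1}$, and untwisted for $\S^{5w_1}$ in the left slot).

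To dispatch these I would proceed exactly as in Lemmas~\ref{first4} and~\ref{first6}: rewrite $\ext(E_m, E_n(-i)) = \hh(X, (E_m)^\vee \otimes E_n(-i))$, expand the dual via Lemma~\ref{duals} (so that $(\S^{kw_1})^\vee = \S^{kw_6}(-k)$), decompose the tensor product with Lemma~\ref{tensorproducts}, and substitute the semi-simplifications of Lemma~\ref{ss} whenever $\T$, $\E$ or $\F$ appears. Every summand that results is either a bundle of the form $\S^{aw_6 + bw_1}(-m)$ or a twist of one of the distinguished bundles, so each $\ext$-group reduces to an acyclicity check against the tables of Lemma~\ref{acyclic}. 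The cross terms involving $\T$ additionally call for the three-term complex of Lemma~\ref{cohomologyextension} together with the self-duality of Lemma~\ref{dualextension}, handled verbatim as in the treatment of $\ext(\E(i), \T)$ and $\ext(\F(i), \T)$ within Lemma~\ref{first6}.

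The main obstacle is organisational rather than a single delicate estimate: one must confirm that the restricted twist ranges forced by the staircase partition align exactly with the acyclicity windows recorded in Lemma~\ref{acyclic}. This is where the shape $[9,8,7,6,\ldots,6]$ is indispensable, since the higher Cartan powers $\S^{3w_1}, \S^{4w_1}, \S^{5w_1}$ admit only a decreasing number of twists before a summand such as $\S^{aw_6+(a-1)w_1}(-i-a)$ or $\S^{(a-1)w_6+aw_1}(-i-a+1)$ escapes its acyclic range $i \in \{0,1\}$, respectively $i \in \{1,2\}$. Matching each required extension group to the correct entry of Lemma~\ref{acyclic}, and checking that no off-by-one failure occurs at the boundaries of these ranges, is the crux of the argument and the reason the partition cannot be made rectangular.
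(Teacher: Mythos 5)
Your proposal is correct and follows essentially the same route as the paper: merging the rectangular collection $\mathcal{N}$ of Lemma~\ref{first6} with the partial columns $\mathcal{G}_3$, $\mathcal{G}_4$ and $\S^{5w_1}$, then reducing the remaining cross-term $\mathrm{Ext}$-vanishings via duals, semi-simplifications, the three-term complex of Lemma~\ref{cohomologyextension}, and the tensor decompositions of Lemma~\ref{tensorproducts} to the acyclicity tables of Lemma~\ref{acyclic}. Your identification of the staircase $9,8,7$ as matching the restricted acyclicity windows for $\S^{3w_1}, \S^{4w_1}, \S^{5w_1}$, and the count $9+8+7+8\cdot 6 = 72 = \mathrm{rk}\,K_0(X)$ for maximality, are exactly the paper's argument.
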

\begin{proof}
In view of Lemmas \ref{first4} and \ref{first6}, we merge the collection $\mathcal{N}$ with $\mathcal{G}_3$, $\mathcal{G}_4$ and the exceptional bundle $\S^{5w_1}$. We have to prove the following vanishings:

\begin{align}
    &\ext(\S^{5w_1}, V) = 0 \text{ for } V \in \mathbf{Q}_0 \setminus \{\S^{5w_1}\},\label{FIN0}\\
    &\ext(\S^{4w_1}, V(-i)) = 0 \text{ for } i\in \{0,1\} \text{ and } V \in \mathbf{Q}_0,\label{FIN1}\\
       &\ext(\S^{3w_1}, V(-i)) = 0 \text{ for } i\in \{0,1,2\} \text{ and } V \in \mathbf{Q}_0,\label{FIN2}\\
       &\ext(V, \S^{tw_1}(-i)) = 0 \text{ for } 1\leq i \leq 10, t \in \{3,4,5\} \text{ and } V \in \mathbf{Q}_0. \label{FIN3}
\end{align}

\underline{Equations \ref{FIN0}, \ref{FIN1}, and \ref{FIN2}.}

For $V = \O$, vanishing follows from the acyclicity of $\S^{tw_6}(-i-t)$ for $t \in \{3,4,5\}$ and $i \in \{0,1,2\}$. For $V = \T$, use the complex from Lemma \ref{cohomologyextension} to deduce that the vanishing is equivalent to 

\begin{align}
    &\ext(\S^{tw_1}(i), \O) = 0 \text{ for } 0 \leq i \leq 2,\label{finn1}\\
    &\ext(\S^{tw_1}(i), \S^{w_1+w_6}(-1)) = 0 \text{ for } 0 \leq i \leq 2,\label{finn2}\\
    &\ext(\S^{tw_1}(i), \T(-1)) = 0 \text{ for } 0 \leq i \leq 2.\label{finn3}
\end{align}

Equation \ref{finn1} follows from the acyclicity of $\S^{tw_1}(-i)$ for $t \in \{3,4,5\}$. Equation \ref{finn2} is equivalent to the acyclicity of $\S^{w_1+(t+1)w_6}(-i-t-1)$, $\S^{w_1+w_5+(t-1)w_6}(-i-t-1)$, $\S^{tw_6}(-i-t)$, and $\S^{w_5+(t-2)w_6}(-i-t)$. For equation \ref{finn3}, replace $\T$ with its semi-simple parts. The vanshing is then equivalent to the acyclicity of $\S^{w_4+tw_6}(-i-t-2)$ and $\S^{w_3+(t-1)w_6}(-i-t-1)$.

For $V = \E$, replace $\E$ with its semi-simple parts to obtain
\[\ext(\S^{tw_1}, \E(-i)) = \hh(X,\S^{tw_6}(-i-t))\oplus \hh(X, \S^{tw_6}(-t)\otimes \S^{w_1}(1)).\]

The vanishing is then equivalent to the acyclicity of $\S^{tw_6}(-i-t+1)$, $\S^{tw_6+w_1}(-i-t+1)$, and $\S^{(t-1)w_6}(-i-t)$ for $t \in \{3,4,5\}$. For $V = \F$, we follow a similar procedure to see that the vanishing is equivalent to the acyclicity of $\S^{w_3+tw_6}(-i-t-1)$ and $\S^{w_1+(t-1)w_6}(-i-t)$ for $t \in \{3,4,5\}$.

The cases $\ext(\S^{tw_1}, \S^{tw_1}(-i)) = 0$ for $t \in \{3,4\}$ and $i \in \{0,1,2\}$ follow from Lemma \ref{collections} as $\mathcal{F}_3$ and $\mathcal{F}_4$ are both exceptional collections.

The case $\ext(\S^{4w_1}, \S^{3w_1}(-i)) = 0$ for $i \in \{0,1\}$ follows from the acyclicity of $\S^{4w_6+3w_1}(-i-4)$, $\S^{3w_6+2w_1}(-i-3)$, $\S^{2w_6+w_1}(-i-2)$, and $\S^{w_6}(-i-1)$. Finally, the case $\ext(\S^{3w_1}, \S^{4w_1}(-i)) = 0$ for $i \in \{1,2\}$ follows from the acyclicity of $\S^{3w_6+4w_1}(-i-3)$, $\S^{2w_6+3w_1}(-i-2)$, $\S^{w_6+2w_1}(-i-1)$, $\S^{w_1}(-i)$ for $i \in \{1,2\}$. All cases are covered by Lemma \ref{acyclic}. 

$\xqed$

\underline{Equation \ref{FIN3}.}

For $V = \O$, vanishing is immediate by Lemma \ref{acyclic}. For $V = \T$, calculations are identical to equation \ref{M05}. For $V = \E$, calculations are identical to equation \ref{ES2}. For $V = \F$, calculations are identical to equation \ref{FS2}. For $V=\S^{w_1}$ and $\S^{2w_1}$, calculations are similar to equations \ref{M011} and \ref{M012}. Lemma \ref{acyclic} covers all necessary cases, which concludes the proof. 
\end{proof}

\bibliography{article}
\bibliographystyle{acm}
\end{document}